\newtheorem{thm}{Theorem}[section]
\newtheorem{lem}{Lemma}[section]
\newtheorem{cor}{Corollary}[section]
\newtheorem{Acknw}{Acknowledgement}
\theoremstyle{definition}
\def\beq{\begin{equation}}
\def\eeq{\end{equation}}
\def\ben{\begin{equation*}}
\def\een{\end{equation*}}
\def \b{\begin}
\def\e{\end}
\begin{document}
\numberwithin{equation}{section}

\begin{frontmatter}
\title{Cegrell's classes and a variational approach for the quaternionic Monge-Amp\`{e}re equation\tnoteref{mytitlenote}}
\tnotetext[mytitlenote]{This work is supported by National Nature Science Foundation in China (No. 11401390; No. 11571305).}

\author{Dongrui Wan\corref{mycorrespondingauthor}}
\cortext[mycorrespondingauthor]{Corresponding author}
\ead{wandongrui@szu.edu.cn,+86 755 2653 8953,
Room 415, Science and Technology Building, Shenzhen University}

\address{College of Mathematics and Statistics, Shenzhen University, Room 415 Science and Technology Building, Shenzhen, 518060, PR. China}

\begin{abstract}In this paper, we introduce finite energy classes of quaternionic  plurisubharmonic functions of Cegrell type and study the quaternionic Monge-Amp\`{e}re operator on these classes on quaternionic hyperconvex domains of $\Bbb H^{n}$. We extend the domain of definition of quaternionic Monge-Amp\`{e}re operator to some Cegrell's classes, the functions of which are not necessarily bounded. We show that integration by parts and comparison principle are valid on some classes. Moreover, we use the variational method to solve the quaternionic Monge-Amp\`{e}re equations when the right hand side is a positive measure of finite energy.\end{abstract}

\begin{keyword} Monge-Amp\`{e}re operator; quaternionic plurisubharmonic function;
cegrell's class; variational approach 
\end{keyword}

\end{frontmatter}

\section{Introduction}\par
The problem of extending the domain of definition of complex Monge-Amp\`ere operator $(dd^c\cdot)^n$ on some class of plurisubharmonic functions was studied by many analysts in the past three decades. In the fundamental papers \cite{bed,beddirichlet} Bedford and Taylor showed that this operator is well defined on the class of locally bounded plurisubharmonic functions and they studied the Dirichlet problem for the complex Monge-Amp\`ere equation with continuous data in a strictly pseudoconvex domain of $\Bbb C^n$. There are several methods to define the complex Monge-Amp\`ere operator as a positive measure for some unbounded plurisubharmonic functions \cite{kiselman,demailly85,sibony85,cegrell1986,bed1990}. In particular, Cegrell \cite{cegrell1998,cegrell2004} introduced several finite energy classes of plurisubharmonic functions on which the complex Monge-Amp\`ere operator can be defined and has important properties. These Cegrell's classes have fruitful applications in recent years.

The aim of this paper is to introduce these Cegrell's finite energy classes for quaternionic plurisubharmonic functions, and to extend the domain of definition of quaternionic Monge-Amp\`ere operator to some classes of unbounded plurisubharmonic functions on quaternionic hyperconvex domains of $\Bbb H^{n}$. Moreover, we establish some comparison principle and convergence results for quaternionic Monge-Amp\`ere operator on these classes. Finally, using the variational method we solve the following quaternionic Monge-Amp\`ere equations on quaternionic hyperconvex domains of $\Bbb H^{n}$, $$(\triangle \varphi)^n=\mu, $$ 
where $(\triangle \varphi)^n$ denotes the quaternionic Monge-Amp\`ere operator of $\varphi$ and $\mu$ is a positive Radon measure in $\Omega$.

The quaternionic Monge-Amp\`ere equation has attracted considerable studies in recent years. The quaternionic version of Calabi-Yau conjecture, which has applications in superstring theory, has been solved recently on compact manifolds with a flat hyperk\"ahler metric \cite{alesker9}. Relating to this problem, it is interesting to study the quaternionic Monge-Amp\`ere equations \cite{alesker2,alesker6,alesker7,wan6,wang1}.

Denote by $PSH(\Omega)$ the class
of quaternionic plurisubharmonic functions on $\Omega$ and by $PSH^-(\Omega)$ the subclass of negative functions in $PSH(\Omega)$. Let $\Omega$ be a quaternionic hyperconvex domain of $n$-dimensional quaternionic space $\Bbb H^{n}$, this means that it is
open, bounded, connected and that there exists $\rho\in PSH^-(\Omega)$ such that
the set $\{q\in\Omega:\rho(q)<c\}$ is a relatively compact subset of
$\Omega$, for each $c\in(-\infty,0)$. Such function $\rho$ is called an
exhaustion function of $\Omega$. In this paper, unless otherwise specified, $\Omega$ will always be a quaternionic hyperconvex domain in $\Bbb H^{n}$.

In this paper, we study the following finite energy classes of quaternionic $PSH$ functions in $\Omega$. They are generalizations of Cegrell's classes for complex plurisubharmonic functions \cite{cegrell1998,cegrell2004}. Let $p\geq1$.
\small{\begin{itemize}
  \item $\mathcal {E}_0(\Omega)=\left\{\varphi\in PSH^-\cap L^\infty(\Omega):\lim_{q\rightarrow\partial\Omega}\varphi(q)=0,~\int_\Omega(\triangle\varphi)^n<+\infty\right\}$
  \item $\mathcal {F}(\Omega)=\left\{\varphi\in PSH^-(\Omega): \exists ~\mathcal {E}_0(\Omega)\ni \varphi_j\searrow \varphi,~\sup_{j\geq1}\int_\Omega(\triangle\varphi_j)^n<+\infty\right\}$
      \item $\mathcal {E}(\Omega)=\left\{\varphi\in PSH^-(\Omega): \exists ~\varphi_K\in \mathcal {F}(\Omega) ~\text{such that~} \varphi_K=\varphi~\text{on}~K, \forall K\Subset \Omega\right\}$
      \item $\mathcal {E}_p(\Omega)=\left\{\varphi\in PSH^-(\Omega): \exists ~\mathcal {E}_0(\Omega)\ni \varphi_j\searrow \varphi,~\sup_{j}\int_\Omega(-\varphi_j)^p(\triangle\varphi_j)^n<+\infty\right\}$
      \item$\mathcal {F}_p(\Omega)=\mathcal {E}_p(\Omega)\cap \mathcal {F}(\Omega)$
      \end{itemize}}As in the complex case, these classes satisfy
 $\mathcal {E}_0(\Omega)\subset \mathcal {F}(\Omega)\subset \mathcal {E}(\Omega)$ and $\mathcal {E}_0(\Omega)\subset \mathcal {F}_p(\Omega)\subset \mathcal {E}_p(\Omega)$.
 
 The paper is organized as follows. The preliminary results about the quaternionic plurisubharmonic function and quaternionic Monge-Amp\`{e}re operator are given in Section 2. In Section 3, first we establish a global approximation of negative $PSH$ functions on hyperconvex domains. The continuous functions in $\mathcal {E}_0(\Omega)$ can be considered as the test functions. By using the closed positive current results in \cite{wan4,wan7}, we prove that integration by parts is allowed in $\mathcal {E}_0(\Omega)$. Then we show that the quaternionic Monge-Amp\`ere operator is well defined for functions in $\mathcal {E}(\Omega)$ and $\mathcal {E}_p(\Omega)$, $p\geq1$. Integration by parts is also allowed in $\mathcal {F}(\Omega)$ and $\mathcal {E}_p(\Omega)$. Several inequalities including the energy estimate for quaternionic Monge-Amp\`{e}re operator are established on these classes. These preparations are needed in the proof of our main result. Finally in Section 4 we use the variational method to solve quaternionic Monge-Amp\`{e}re equations and prove the following main theorem.
 
  \b{thm}\label{t6.2'}Let $\mu$ be a positive Radon measure in $\Omega$ and $p\geq1$. Then we have $\mu=(\triangle \varphi)^n$ with $\varphi\in\mathcal{E}_p(\Omega)$ if and only if $\mathcal{E}_p(\Omega)\subset L^p(\Omega,\mu)$.
\e{thm}

The variational approach to solve complex Monge-Amp\`{e}re equations was done on hyperconvex domains of $\mathbb{C}^n$ \cite{cegrell2012}, and on compact K\"ahler manifolds \cite{guedj-variational}. Recently, Lu used the variational method to solve the degenerate complex Hessian equations \cite{lu,lu-degenerate}. 

We follow the methods from Cegrell \cite{cegrell1998,cegrell2004} for the complex Monge-Amp\`{e}re operator and from Lu \cite{lu,lu-degenerate} for the complex Hessian operator. The theory of quaternionic closed positive currents we established in \cite{wan3,wan4,wan7} allows us to treat the quaternionic Monge-Amp\`{e}re operator as an operator of divergence form, and so we can integrate by parts. Since this can avoid the inconvenience in using Moore determinant, we obtained several quaternionic pluripotential results in \cite{wan6,wan8}. All these preparations play key roles in this paper.

\section{Preliminaries}\par
A real valued function $f:\mathbb{H}^n\rightarrow \mathbb{R}$ is called quaternionic plurisubharmonic if it is upper semi-continuous and its restriction to any right quaternionic line is subharmonic (in the usual sense). Any quaternionic $PSH$ function is subharmonic (cf. \cite{alesker1,alesker4,alesker2} for more information about $PSH$ functions).
\b{pro}\label{p1.1} Let $\Omega$ be an open subset of $\mathbb{H}^n$.\\
(1). The family $PSH(\Omega)$ is a convex cone, i.e. if $\alpha,\beta$ are non-negative numbers and $u,v\in PSH(\Omega)$, then $\alpha u+\beta v\in PSH(\Omega)$; and $\max\{u,v\}\in PSH(\Omega)$.\\
(2). If $\Omega$ is connected and $\{u_j\}\subset PSH(\Omega)$ is a decreasing sequence, then $u=\lim_{j\rightarrow\infty}u_j\in PSH(\Omega)$ or $u\equiv-\infty$.\\
(3). Let $\{u_\alpha\}_{\alpha\in A}\subset PSH(\Omega)$ be such that its upper envelope $u=\sup_{\alpha\in A}u_\alpha$ is locally bounded above. Then the upper semicontinuous regularization $u^*\in PSH(\Omega)$.\\
(4). Let $\omega$ be a non-empty proper open subset of $\Omega$, $u\in PSH(\Omega),v\in PSH(\omega)$, and $\limsup_{q\rightarrow\zeta}v(q)\leq u(\zeta)$ for each $\zeta\in \partial\omega\cap\Omega$, then
$$w:=\left\{
      \begin{array}{ll}
        \max\{u,v\}, & \text{in}~\omega \\
        u, &  \text{in}~\Omega\backslash\omega
      \end{array}
    \right. \quad\in PSH(\Omega).$$\\
(5). If $u\in PSH(\Omega)$, then the standard regularization $u_\varepsilon:=u*\rho_\varepsilon$ is also $PSH$ in $\Omega_\varepsilon:=\{q\in \Omega:dist(q,\partial\Omega)>\varepsilon\}$, moreover, $u_\varepsilon\searrow u$ as $\varepsilon\rightarrow0$.\\
(6). If $\gamma(t)$ is a convex increasing function of a parameter $t\in\mathbb{R}$ and $u\in PSH$, then $\gamma\circ u\in PSH$.

\e{pro}
The quaternionic Monge-Amp\`{e}re operator is defined as the Moore determinant of the quaternionic Hessian of $C^2$ function $u$:
 \begin{equation}\label{det}det(u)=det\left[\frac{\partial^2u}{\partial q_j\partial \bar{q}_k}(q)\right].\end{equation}Alesker \cite{alesker1} extended the definition of quaternionic Monge-Amp\`{e}re operator to continuous $PSH$ functions, and introduced in \cite{alesker2} an operator in terms of the Baston operator $\triangle$. The $n$-th power of this operator is exactly the quaternionic Monge-Amp\`{e}re operator when the manifold is flat. The author and Wang introduced in \cite{wan3} two first-order differential operators $d_0$ and $d_1$ acting on the quaternionic version of differential forms. The behavior of $d_0,d_1$ and $\triangle=d_0d_1$ is very similar to $\partial ,\overline{\partial}$ and $\partial\overline{\partial}$ in several complex variables. The quaternionic Monge-Amp\`{e}re operator can be defined as $(\triangle u)^n$ and has a simple explicit expression, which is much more convenient to use than the previous definition (\ref{det}) by using  Moore determinant.

We use the conjugate embedding
\begin{equation*}\begin{aligned}\tau:\mathbb{H}^{n}\cong\mathbb{R}^{4n}&\hookrightarrow\mathbb{C}^{2n\times2},\\ (q_0,\ldots,q_{n-1})&\mapsto \textbf{z}=(z^{j\alpha})\in\mathbb{C}^{2n\times2},
\end{aligned}\end{equation*}
$j=0,1,\ldots,2n-1, ~\alpha=0 ,1 ,$ with
\begin{equation*}\label{2.2}\left(
                             \begin{array}{cc}
                               z^{(2l)0 } & z^{(2l)1 } \\
                               z^{(2l+1)0 } & z^{(2l+1)1 } \\
                             \end{array}
                           \right):=\left(
                                      \begin{array}{cc}
                                         x_{4l}-\textbf{i}x_{4l+1} & -x_{4l+2}+\textbf{i}x_{4l+3} \\
                                        x_{4l+2}+\textbf{i}x_{4l+3} & x_{4l}+\textbf{i}x_{4l+1} \\
                                      \end{array}
                                    \right),
\end{equation*}for $q_l=x_{4l}+\textbf{i}x_{4l+1}+\textbf{j}x_{4l+2}+\textbf{k}x_{4l+3}$, $l=0,1,\ldots,n-1$. Pulling back to the quaternionic
space $\mathbb{H}^n\cong\mathbb{R}^{4n}$ by the embedding above, we define on $\mathbb{R}^{4n}$ first-order differential operators $\nabla_{j\alpha}$ as following:
\begin{equation*}\label{2.4}\left(
                             \begin{array}{cc}
                               \nabla_{(2l)0 } & \nabla_{(2l)1 } \\
                               \nabla_{(2l+1)0 } & \nabla_{(2l+1)1 } \\
                             \end{array}
                           \right):=\left(
                                      \begin{array}{cc}
                                         \partial_{x_{4l}}+\textbf{i}\partial_{x_{4l+1}} & -\partial_{x_{4l+2}}-\textbf{i}\partial_{x_{4l+3}} \\
                                        \partial_{x_{4l+2}}-\textbf{i}\partial_{x_{4l+3}} & \partial_{x_{4l}}-\textbf{i}\partial_{x_{4l+1}} \\
                                      \end{array}
                                    \right).
\end{equation*}
$z^{k\beta}$'s can be viewed as independent variables and $\nabla_{j\alpha}$'s are derivatives with respect to these variables. The operators $\nabla_{j\alpha}$'s play very important roles in the investigating of regular functions in several quaternionic variables \cite{kang,Wang,wang1,Wang2,Wang3}.
\par

Let $\wedge^{2k}\mathbb{C}^{2n}$ be the complex exterior algebra generated by $\mathbb{C}^{2n}$, $0\leq k\leq n$. Fix a basis
$\{\omega^0,\omega^1,\ldots$, $\omega^{2n-1}\}$ of $\mathbb{C}^{2n}$. Let $\Omega$ be a domain in $\mathbb{R}^{4n}$. Define $d_0,d_1:C_0^\infty(\Omega,\wedge^{p}\mathbb{C}^{2n})\rightarrow C_0^\infty(\Omega,\wedge^{p+1}\mathbb{C}^{2n})$ by \begin{equation*}\begin{aligned}&d_0F=\sum_{k,I}\nabla_{k0 }f_{I}~\omega^k\wedge\omega^I,\\
&d_1F=\sum_{k,I}\nabla_{k1 }f_{I}~\omega^k\wedge\omega^I,\\
&\triangle
F=d_0d_1F,
\end{aligned}\end{equation*}for $F=\sum_{I}f_{I}\omega^I\in C_0^\infty(\Omega,\wedge^{p}\mathbb{C}^{2n})$,  where the multi-index
$I=(i_1,\ldots,i_{p})$ and
$\omega^I:=\omega^{i_1}\wedge\ldots\wedge\omega^{i_{p}}$. Although $d_0,d_1$ are not exterior differential, their behavior is similar to the exterior differential: $d_0d_1=-d_1d_0$;  $d_0^2=d_1^2=0$, $d_0\triangle=d_1\triangle=0$; for $F\in C_0^\infty(\Omega,\wedge^{p}\mathbb{C}^{2n})$, $G\in C_0^\infty(\Omega,\wedge^{q}\mathbb{C}^{2n})$, we have\begin{equation}\label{2.228}d_\alpha(F\wedge G)=d_\alpha F\wedge G+(-1)^{p}F\wedge d_\alpha G,\qquad \alpha=0,1.\end{equation}
Moreover $\triangle u_1\wedge\ldots\wedge\triangle u_n$ satisfies the following remarkable identities:\begin{equation}\label{2.37}\begin{aligned}&\triangle u_1\wedge \triangle
u_2\wedge\ldots\wedge\triangle u_n=d_0(d_1u_1\wedge \triangle
u_2\wedge\ldots\wedge\triangle u_n)\\=&-d_1(d_0u_1\wedge \triangle
u_2\wedge\ldots\wedge\triangle u_n)=d_0d_1(u_1\triangle
u_2\wedge\ldots\wedge\triangle u_n)\\=&\triangle (u_1
\triangle u_2\wedge\ldots\wedge\triangle u_n).
\end{aligned}\end{equation}
To write down the explicit expression, we
define for a function $u\in C^2$,\begin{equation*}\label{2.10}\triangle_{ij}u:=\frac{1}{2}(\nabla_{i0 }\nabla_{j1 }u-\nabla_{i1 }\nabla_{j0 }u),~~~\triangle u=\sum_{i,j=0}^{2n-1}\triangle_{ij}u~\omega^i\wedge
\omega^j.\end{equation*}
\begin{equation*}\label{2.11}\begin{aligned}\triangle
u_1\wedge\ldots\wedge\triangle
u_n&=\sum_{i_1,j_1,\ldots}\triangle_{i_1j_1}u_1\ldots\triangle_{i_nj_n}u_n~\omega^{i_1}\wedge
\omega^{j_1}\wedge\ldots\wedge \omega^{i_n}\wedge
\omega^{j_n}\\&=\sum_{i_1,j_1,\ldots}\delta^{i_1j_1\ldots
i_nj_n}_{01\ldots(2n-1)}\triangle_{i_1j_1}u_1\ldots\triangle_{i_nj_n}u_n~\Omega_{2n},
\end{aligned}\end{equation*}for $u_1,\ldots,u_n\in
C^2$, where $\Omega_{2n}$ is defined as\begin{equation*}\label{2.21}\Omega_{2n}:=\omega^0\wedge
\omega^1\wedge\ldots\wedge\omega^{2n-2}\wedge
\omega^{2n-1},\end{equation*}and $\delta^{i_1j_1\ldots
i_nj_n}_{01\ldots(2n-1)}:=$ the sign of the permutation from $(i_1,j_1,\ldots
i_n,j_n)$ to $(0,1,\ldots,2n-1),$ if $\{i_1,j_1,\ldots,
i_n,j_n\}=\{0,1,\ldots,2n-1\}$; otherwise, $\delta^{i_1j_1\ldots
i_nj_n}_{01\ldots(2n-1)}=0$.

The quaternionic Monge-Amp\`{e}re operator $\text{det}(u)$ given by (\ref{det}) coincides with $\triangle_nu$ while the mixed
Monge-Amp\`{e}re operator ~$\text{det}(u_1,\ldots,u_n)$ ~coincides with
$\triangle_n(u_1,\ldots,
u_n)$. For $C^2$ functions we proved in \cite{wan3} that $$\triangle u_1\wedge\ldots\wedge\triangle u_n=n!\text{det}(u_1,\ldots,u_n)\Omega_{2n}~~\text{and}~~(\triangle u)^n=n!\text{det}(u)\Omega_{2n}.$$

We defined in \cite{wan3} the notions of quaternionic closed positive forms and closed positive currents.  Please see \cite{wan3,wan4,wan7} for detailed definitions. 
\begin{lem}\label{p3.9}$($Stokes-type formula, Lemma 2.1 in \cite{wan4}$)$ Assume that $T$ is a smooth $(2n-1)$-form in $\Omega$, and $h$ is a smooth function with $h=0$ on $\partial\Omega$. Then we have \begin{equation*}\int_\Omega hd_\alpha T=-\int_\Omega d_\alpha h\wedge T,\qquad \alpha=0,1.\end{equation*}
\end{lem}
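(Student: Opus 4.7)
The strategy is to reduce the claim to the classical divergence theorem on $\mathbb{R}^{4n}$ via the Leibniz identity (\ref{2.228}). Because $h$ is a scalar ($0$-form), (\ref{2.228}) with $p=0$ gives
$$d_\alpha(hT)=d_\alpha h\wedge T+h\,d_\alpha T,$$
so after integrating over $\Omega$ the lemma becomes equivalent to the single identity
$$\int_\Omega d_\alpha(hT)=0\qquad(\alpha=0,1),$$
valid whenever $h\in C^\infty(\overline{\Omega})$ vanishes on $\partial\Omega$. The rest of the argument is devoted to this vanishing statement.

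To verify it, I would expand $T=\sum_{|J|=2n-1}T_J\,\omega^J$ in the fixed basis $\{\omega^0,\ldots,\omega^{2n-1}\}$. For each multi-index $J$ of length $2n-1$ there is a unique index $k(J)\notin J$, and $\omega^{k(J)}\wedge\omega^J=\varepsilon_J\,\Omega_{2n}$ for some sign $\varepsilon_J\in\{\pm 1\}$. Applying the definition of $d_\alpha$ to $hT$ therefore collapses to
$$d_\alpha(hT)=\Big(\sum_J \varepsilon_J\,\nabla_{k(J)\,\alpha}(hT_J)\Big)\Omega_{2n},$$
and by the explicit formula for the first-order operators $\nabla_{j\alpha}$ on $\mathbb{R}^{4n}$, each coefficient $\nabla_{k(J)\,\alpha}(hT_J)$ further expands as a complex linear combination of ordinary real partial derivatives $\partial_{x_i}(hT_J)$.

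Since integration of a top-degree element of $\wedge^{2n}\mathbb{C}^{2n}$ against $\Omega_{2n}$ is by convention Lebesgue integration on $\Omega\subset\mathbb{R}^{4n}$, the proof concludes by applying the classical divergence theorem term by term: for each smooth scalar $f:=hT_J$, which vanishes on $\partial\Omega$ by the hypothesis on $h$ together with the smoothness of $T$, one has $\int_\Omega \partial_{x_i}f\,dV=\int_{\partial\Omega}f\,\nu_i\,dS=0$. Summing the resulting zero contributions over $J$ and over the finitely many real partials hidden inside each $\nabla_{k(J)\,\alpha}$ gives $\int_\Omega d_\alpha(hT)=0$, and substituting back into the Leibniz identity yields the stated Stokes-type formula. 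The only mild obstacle is the bookkeeping of the signs $\varepsilon_J$ and of the complex weights produced by $\nabla_{k\alpha}$; importantly, no cancellation between terms is required, because each individual boundary integral is already killed separately by the boundary hypothesis $h|_{\partial\Omega}=0$.
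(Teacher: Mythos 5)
Your argument is correct: the paper itself gives no proof here (the lemma is quoted verbatim as Lemma 2.1 of the cited reference \cite{wan4}), and your reduction via the Leibniz identity (\ref{2.228}) with $p=0$ to the vanishing of $\int_\Omega d_\alpha(hT)$, followed by expansion in the basis and termwise application of the classical divergence theorem to the real partials inside each $\nabla_{k\alpha}$, is exactly the standard argument one expects behind that citation. The only point worth a word is that the final step $\int_\Omega \partial_{x_i}(hT_J)=\int_{\partial\Omega}hT_J\,\nu_i\,dS=0$ presumes $\partial\Omega$ regular enough for the divergence theorem (or an approximation of $h$ by compactly supported cutoffs), a hypothesis implicit in the original statement.
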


Bedford-Taylor theory \cite{bed} in complex analysis can be generalized to the quaternionic case. Let $u$ be a locally bounded $PSH$ function and let $T$ be a closed
positive $2k$-current. Then $\triangle u\wedge T$ defined by \begin{equation}\label{3.110}\triangle u\wedge
T:=\triangle(uT),\end{equation}i.e., $(\triangle u\wedge
T)(\eta):=uT(\triangle\eta)$, for any test form $\eta$, is also a closed positive
current. Inductively,
\begin{equation}\label{3.111}\triangle
u_1\wedge\ldots\wedge\triangle u_p\wedge T:=\triangle(u_1\triangle
u_2\wedge\ldots\wedge\triangle u_p\wedge T)\end{equation} is a closed
positive current, when $u_1,\ldots,u_p\in PSH\cap
L_{loc}^\infty(\Omega)$. 

In particular, for $u_1,\ldots,u_n\in PSH\cap
L_{loc}^{\infty} (\Omega)$, $\triangle u_1\wedge\ldots\wedge\triangle u_n=\mu\Omega_{2n}$ for a well defined positive Radon
measure $\mu$.
\begin{lem}\label{t3.2}(1) (Lemma 2.2 in \cite{wan4}) \\Let
$v^1,\ldots,v^k\in PSH\cap L_{loc}^{\infty}(\Omega)$. Let
$\{v_j^1\}_{j\in\mathbb{N}},\ldots,\{v_j^k\}_{j\in\mathbb{N}}$ be
decreasing sequences of PSH functions in $\Omega$ such
that $\lim_{j\rightarrow\infty}v_j^t=v^t$   pointwise in $\Omega$
for each $t $. Then the currents $\triangle
v_j^1\wedge\ldots\wedge\triangle v_j^k $ converge weakly to
$\triangle v^1\wedge\ldots\wedge\triangle v^k $ as
$j\rightarrow\infty$.\\
(2) (Proposition 4.1 in \cite{wan4})\\ Let $\{u_j\}_{j\in \mathbb{N}}$ be a sequence in $PSH\cap L_{loc}^\infty(\Omega)$ that increases to $u\in PSH\cap L_{loc}^\infty(\Omega)$ almost everywhere in $\Omega$ $($with respect to Lebesgue measure$)$. Then the currents $(\triangle u_j)^n$ converge weakly to $(\triangle u)^n$ as $j\rightarrow\infty$.\\
(3) (Proposition 3.2 in \cite{wan4})\\ Let $v^0,\ldots,v^k\in PSH\cap L_{loc}^\infty(\Omega)$. Let
$\{v_j^0\}_{j\in\mathbb{N}},\ldots,\{v_j^k\}_{j\in\mathbb{N}}$ be
decreasing sequences of PSH functions in $\Omega$ such
that $\lim_{j\rightarrow\infty}v_j^t=v^t$ pointwise in $\Omega$
for $t=0,\ldots,k$. Then the currents $v_j^0\triangle
v_j^1\wedge\ldots\wedge\triangle v_j^k $ converge weakly to
$v^0\triangle v^1\wedge\ldots\wedge\triangle v^k $ as
$j\rightarrow\infty$.
\end{lem}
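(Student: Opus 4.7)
The plan is to establish parts (1) and (3) by a joint induction on $k$, and to reduce part (2) to part (1) via mollification. The central tools are: the recursive definition of the wedge product of $\triangle$ given by (\ref{3.110})--(\ref{3.111}), which absorbs integration by parts into the definition itself via the Stokes-type formula (Lemma \ref{p3.9}); the Leibniz-type rule (\ref{2.228}); and a quaternionic Chern--Levine--Nirenberg-type inequality providing uniform local mass bounds on $\triangle v^1_j\wedge\cdots\wedge\triangle v^k_j$ in terms of $L^\infty$-norms of the $v^i_j$ on a slightly larger compact set.

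For parts (1) and (3), I induct jointly on $k$: at each level, (1) is deduced from (3) at level $k-1$, then (3) at level $k$ is deduced from (1) at level $k$. The base case is (1) at $k=1$: since $v^1_j\searrow v^1$ locally boundedly, $v^1_j\to v^1$ in $L^1_{loc}$ by monotone convergence, and by the distributional definition of $\triangle v^1_j$,
\[
\int \eta\wedge\triangle v^1_j = \int v^1_j\,\triangle\eta \;\longrightarrow\; \int v^1\,\triangle\eta = \int \eta\wedge\triangle v^1
\]
for every smooth compactly supported test form $\eta$. For the step $k-1\Rightarrow k$ in (1), I use (\ref{3.111}) to write $\triangle v^1_j\wedge\cdots\wedge\triangle v^k_j = \triangle(v^1_j\,\triangle v^2_j\wedge\cdots\wedge\triangle v^k_j)$; pairing against $\eta$ and using the distributional definition of $\triangle$ gives
\[
\int\eta\wedge\triangle v^1_j\wedge\cdots\wedge\triangle v^k_j = \int v^1_j\,\triangle v^2_j\wedge\cdots\wedge\triangle v^k_j\wedge\triangle\eta,
\]
and the right side converges by part (3) at level $k-1$ (with $v^1_j$ in the role of $v^0_j$, and $\triangle\eta$ as the test form). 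To deduce (3) at level $k$ from (1) at level $k$, I must show that multiplication by the $L^1_{loc}$-convergent, uniformly locally bounded sequence $v^0_j$ preserves weak convergence of the currents $\triangle v^1_j\wedge\cdots\wedge\triangle v^k_j$; this will use the Chern--Levine--Nirenberg mass bound together with quasi-continuity of PSH functions with respect to the quaternionic Monge--Amp\`ere capacity.

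For part (2), the increasing case is more delicate because $(\triangle\,\cdot)^n$ is not continuous under $L^1_{loc}$ convergence alone. The strategy is to sandwich $u_j$ between decreasing regularizations of $u$. By Proposition \ref{p1.1}(5), the mollifications $u^\varepsilon := u*\rho_\varepsilon$ are smooth PSH on $\Omega_\varepsilon$ with $u^\varepsilon\searrow u$, so part (1) yields $(\triangle u^\varepsilon)^n\to(\triangle u)^n$ weakly as $\varepsilon\to 0$. On the other side, $u_j\nearrow u$ a.e.~combined with upper semicontinuity and Hartogs' lemma gives $u_j\le u^\varepsilon+o(1)$ uniformly on compacts for each fixed $\varepsilon$ and large $j$. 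Any weak-$*$ cluster point $\nu$ of $(\triangle u_j)^n$ is thereby pinched from above by weak limits involving $(\triangle u^\varepsilon)^n$ via the comparison principle; a mass-preservation argument showing that the total masses of $(\triangle u_j)^n$ on compacts converge to that of $(\triangle u)^n$ (using the monotonicity of $\int\eta(\triangle u_j)^n$ in $j$ and the weak limit from above) then forces $\nu=(\triangle u)^n$.

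The principal obstacle is the step in the joint induction from (1) at level $k$ to (3) at level $k$: the product of a weakly convergent sequence of positive currents with an $L^1_{loc}$-convergent sequence of PSH functions does not converge in the product sense in general. The resolution is the standard capacity-theoretic argument: PSH functions are quasi-continuous with respect to the quaternionic Monge--Amp\`ere capacity, and Chern--Levine--Nirenberg bounds ensure that the currents $\triangle v^1_j\wedge\cdots\wedge\triangle v^k_j$ do not charge sets of arbitrarily small capacity. This reduces the product convergence to uniform-outside-small-capacity approximation of $v^0_j$ by continuous functions, which the uniform mass bound converts into the desired weak limit. The supporting capacity framework in the quaternionic setting is exactly what is developed in \cite{wan4}.
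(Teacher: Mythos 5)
First, a point of comparison: the paper itself gives no proof of Lemma \ref{t3.2}. All three parts are quoted from \cite{wan4} (Lemma 2.2, Proposition 4.1 and Proposition 3.2 there), so there is no in-paper argument to measure yours against. Your reconstruction of parts (1) and (3) --- the joint induction on $k$ that uses the recursive definition (\ref{3.111}) to move the test form through $\triangle$, with the product step $v^0_j T_j\to v^0 T$ handled by Chern--Levine--Nirenberg mass bounds and quasicontinuity with respect to the quaternionic capacity --- is the standard Bedford--Taylor route and is exactly the machinery developed in \cite{wan4}; deferring quasicontinuity and the capacity estimates to that reference is reasonable for an outline, and I see no error in that part.

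Part (2), however, contains a genuine gap. Two of your steps do not hold as stated. First, $\int\eta\,(\triangle u_j)^n$ is not monotone in $j$ for an increasing sequence $u_j$ and a fixed nonnegative test function $\eta$: integration by parts turns it into $\int u_j\,\triangle\eta\wedge(\triangle u_j)^{n-1}$, and $\triangle\eta$ has no sign, so there is nothing to make this monotone; already for $n=1$ the local masses of an increasing sequence of subharmonic functions need not increase or decrease. Second, the comparison principle (Lemma \ref{t2.2}) yields an inequality between total masses over sublevel sets $\{u<v\}$ under a boundary hypothesis; it does not localize to an inequality between weak-$*$ cluster points of $(\triangle u_j)^n$ and $(\triangle u^\varepsilon)^n$ as measures, so the ``pinching'' step is an assertion rather than an argument. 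The Hartogs bound $u_j\le u^\varepsilon+o(1)$ controls only one side and by itself cannot prevent mass from escaping or concentrating in the limit. The actual proof of the increasing case (Proposition 4.1 in \cite{wan4}, following Bedford--Taylor) proceeds differently: one first shows that an increasing sequence of locally bounded PSH functions converging a.e.\ to a PSH function converges \emph{in capacity}, and then applies a general convergence theorem for capacity-convergent sequences, itself proved via quasicontinuity and the CLN estimates. If you wish to retain the sandwich idea you would still need to establish convergence in capacity at some stage, at which point the mollifications become unnecessary.
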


\begin{lem}\label{t2.2}(Comparison principle, Theorem 1.2 in \cite{wan4}) Let $u,v\in PSH\cap L_{loc}^\infty(\Omega)$. If for any $\zeta\in\partial\Omega$,
$$\liminf_{\zeta\leftarrow q\in\Omega}~~(u(q)-v(q))\geq0,$$
then
\begin{equation*}\label{5.1}\int_{\{u<v\}}(\triangle v)^n\leq\int_{\{u<v\}}(\triangle u)^n.
\end{equation*}
\end{lem}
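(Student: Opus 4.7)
The plan is to follow the classical Bedford-Taylor strategy for the comparison principle, with the quaternionic divergence structure (identity \eqref{2.37}) and the Stokes-type Lemma \ref{p3.9} replacing the usual complex ingredients, and Lemma \ref{t3.2} providing the weak convergence of the mixed Monge-Amp\`ere currents.

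First I would reduce the boundary behaviour to a strict one. Replace $v$ by $v-\epsilon$: the hypothesis gives $\liminf_{q\to\partial\Omega}(u-(v-\epsilon))\geq\epsilon>0$, so there is a neighbourhood $W_\epsilon$ of $\partial\Omega$ in $\Omega$ on which $u>v-\epsilon$, and in particular $\{u<v-\epsilon\}\Subset\Omega$. Since the sets $\{u<v-\epsilon\}$ and $\{u\leq v-\epsilon\}$ both increase to $\{u<v\}$ as $\epsilon\searrow 0$, it is enough to prove
\begin{equation*}
\int_{\{u<v-\epsilon\}}(\triangle v)^n\leq\int_{\{u\leq v-\epsilon\}}(\triangle u)^n
\end{equation*}
for each fixed $\epsilon>0$ and then pass to the limit by monotone convergence. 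Set $w:=\max(u,v-\epsilon)$; by Proposition \ref{p1.1}(1), $w\in PSH\cap L^\infty_{loc}(\Omega)$, and $w\equiv u$ on $W_\epsilon$, $w\equiv v-\epsilon$ on $\{u<v-\epsilon\}$ (so $(\triangle w)^n=(\triangle v)^n$ there), and $w\equiv u$ on $\{u>v-\epsilon\}$ (so $(\triangle w)^n=(\triangle u)^n$ there).

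The heart of the argument is the identity $\int_\Omega(\triangle w)^n=\int_\Omega(\triangle u)^n$. I would establish this by the telescoping formula
\begin{equation*}
(\triangle w)^n-(\triangle u)^n=\sum_{k=0}^{n-1}\triangle(w-u)\wedge T_k,\qquad T_k:=(\triangle w)^k\wedge(\triangle u)^{n-1-k},
\end{equation*}
together with the fact, recorded in \eqref{3.111}, that each $T_k$ is a closed positive current. Regularising $u,v$ by the convolutions of Proposition \ref{p1.1}(5) on a slightly shrunken subdomain, the difference $w-u$ becomes smooth and is supported away from $\partial\Omega$ (still inside $\Omega\setminus W_\epsilon$ for the same $\epsilon$, by shrinking if needed); two applications of the Stokes-type formula in Lemma \ref{p3.9}, combined with $d_0T_k=d_1T_k=0$, give
\begin{equation*}
\int_\Omega\triangle(w-u)\wedge T_k=-\int_\Omega d_1(w-u)\wedge d_0T_k=0,
\end{equation*}
and the weak continuity in Lemma \ref{t3.2}(1) lets us pass to the limit to obtain the identity for the original $u,w$. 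Combining with the pointwise facts about $(\triangle w)^n$ above,
\begin{equation*}
\int_{\{u<v-\epsilon\}}(\triangle v)^n=\int_{\{u<v-\epsilon\}}(\triangle w)^n\leq \int_\Omega(\triangle w)^n-\int_{\{u>v-\epsilon\}}(\triangle w)^n=\int_\Omega(\triangle u)^n-\int_{\{u>v-\epsilon\}}(\triangle u)^n=\int_{\{u\leq v-\epsilon\}}(\triangle u)^n,
\end{equation*}
and letting $\epsilon\searrow 0$ gives the desired inequality.

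The main obstacle is making the integration-by-parts step rigorous: the telescoping identity is immediate only for smooth data, whereas $u,v$ are merely locally bounded PSH. The delicate point is that one must arrange the regularisation so that (i) the smoothed version of $w-u$ remains compactly supported in $\Omega$ throughout the approximation (which is exactly what the strict boundary reduction buys us) and (ii) the mixed products $T_k$ formed from the regularised functions converge weakly to the $T_k$ formed from $u,w$; both issues are handled by Lemma \ref{t3.2}(1) together with a diagonal procedure. Once the identity $\int_\Omega(\triangle w)^n=\int_\Omega(\triangle u)^n$ is secured, the rest of the proof is bookkeeping on open sublevel sets and a monotone-convergence limit in $\epsilon$.
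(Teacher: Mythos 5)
The paper does not actually prove this lemma: it is imported verbatim as Theorem 1.2 of \cite{wan4}, so there is no internal proof to compare against. Your outline is the classical Bedford--Taylor scheme (reduce to $\{u<v-\epsilon\}\Subset\Omega$, set $w=\max(u,v-\epsilon)$, show $w$ and $u$ carry the same total mass, then decompose), which is the right skeleton; but as written it contains one genuine gap and one unaddressed finiteness issue.

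The gap is the step ``$w\equiv v-\epsilon$ on $\{u<v-\epsilon\}$, so $(\triangle w)^n=(\triangle v)^n$ there'' (and likewise ``$(\triangle w)^n=(\triangle u)^n$ on $\{u>v-\epsilon\}$''). Locality of the Monge--Amp\`ere operator gives equality of the measures only on \emph{open} sets where the functions coincide, and neither $\{u<v-\epsilon\}$ nor $\{u>v-\epsilon\}$ is open here: $u$ and $v$ are merely upper semicontinuous, so $u-v$ is neither usc nor lsc. (In the complex setting this step can be rescued by quasicontinuity or plurifine locality, but you invoke neither, and the quaternionic analogues are not established in your argument.) The standard repair, and the one this paper's toolkit supports, is to replace the two pointwise identities by the Demailly-type inequality of Lemma \ref{p2}, $(\triangle\max\{u,v\})^n\geq\chi_{\{u\geq v\}}(\triangle u)^n+\chi_{\{u<v\}}(\triangle v)^n$, applied to $u$ and $v-\epsilon$; integrating it and combining with your mass identity $\int(\triangle w)^n=\int(\triangle u)^n$ gives exactly the desired estimate, and only the inequality direction is needed. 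Second, before subtracting $\int_{\{u>v-\epsilon\}}(\triangle w)^n$ from $\int_\Omega(\triangle w)^n$ you must know these quantities are finite; since $u,v$ are only \emph{locally} bounded, $\int_\Omega(\triangle u)^n$ may be $+\infty$ and the cancellation is then meaningless. You should first fix a relatively compact subdomain $\Omega'$ containing $\{u\leq v-\epsilon\}$ with $w=u$ near $\partial\Omega'$ and run the entire argument on $\Omega'$, where the Chern--Levine--Nirenberg estimate gives finite masses. The telescoping/Stokes identity itself is sound once localized (the compactly supported current $(w-u)T_k$ satisfies $\int\triangle\bigl((w-u)T_k\bigr)=0$), though note that $\max(u_\delta,v_\delta-\epsilon)$ is not smooth, so you should either use the distributional form of the Stokes argument or a regularized maximum, and the passage $\int_{\Omega'}(\triangle w_\delta)^n\to\int_{\Omega'}(\triangle w)^n$ under weak convergence requires choosing $\Omega'$ so that its boundary carries no mass (or using that $w_\delta=u_\delta$ near $\partial\Omega'$).
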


\section{Cegrell's classes $\mathcal{E}_0(\Omega),\mathcal{E}(\Omega),\mathcal{F}(\Omega),\mathcal{E}_p(\Omega),\mathcal{F}_p(\Omega)$}

First we establish a global approximation of negative
$PSH$ functions by decreasing sequences of continuous negative $PSH$
functions with zero boundary value and bounded quaternionic Monge-Amp\`{e}re mass. Cegrell proved in \cite{cegrell2004} this approximation for complex $PSH$ functions on hyperconvex domains. He also proved in \cite{cegrell2} that the exhaustion function of a hyperconvex domain in $\Bbb C^n$ can be chosen in $C^\infty(\Omega)\cap C(\overline{\Omega})$ with bounded mass.
\begin{thm}\label{t1.1}For any function $\varphi\in PSH^-(\Omega)$, there is a decreasing sequence $\{\varphi_j\}$ of PSH functions satisfying the following properties:
$\varphi_{j}$ is continuous on $\bar{\Omega}$ and
$\varphi_{j}|_{\partial\Omega}=0$, $supp~(\triangle\varphi_j)^n\Subset \Omega$ for each $j$, and
$\lim_{j\rightarrow+\infty}\varphi_{j}(q)=\varphi(q)$, for $ q\in\Omega$.
\end{thm}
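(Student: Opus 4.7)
\noindent My plan is to adapt Cegrell's strategy \cite{cegrell2004} from the complex setting, constructing $\varphi_j$ as a max-gluing (via Proposition \ref{p1.1}(4)) of a mollification of $\varphi$ with a scaled continuous PSH exhaustion $\rho$ of $\Omega$.

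The first step is to produce a PSH exhaustion $\rho\in PSH^-(\Omega)\cap C(\overline{\Omega})$ with $\rho|_{\partial\Omega}=0$, $\rho<0$ in $\Omega$, and $\mathrm{supp}(\triangle\rho)^n\Subset\Omega$. Hyperconvexity gives a continuous negative exhaustion $\rho_0$; to arrange in addition compact support of the Monge--Amp\`ere mass, I would modify $\rho_0$ on a boundary collar $\Omega\setminus K$ (for a suitable $K\Subset\Omega$) by replacing it with a maximal PSH extension (a Perron envelope) that matches $\rho_0$ on $\partial K$ and equals $0$ on $\partial\Omega$; the standard arguments show that such an extension is maximal, hence its full quaternionic Monge--Amp\`ere measure vanishes outside $K$. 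This is the quaternionic analog of the construction behind \cite{cegrell2}.

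Next, let $\varphi^{(\varepsilon)}:=\varphi\ast\chi_\varepsilon$ denote the standard mollification: smooth, PSH on $\Omega_\varepsilon$, and $\varphi^{(\varepsilon)}\searrow\varphi$ as $\varepsilon\searrow 0$ by Proposition \ref{p1.1}(5). Excluding the trivial case $\varphi\equiv 0$, the strong maximum principle for subharmonic functions forces $\varphi<0$ strictly in $\Omega$, so $\varphi^{(\varepsilon)}<0$ on $\Omega_\varepsilon$. For each $j$, I would choose parameters $\varepsilon_j, M_j>0$ and $c_j>0$ inductively and set $\omega_j:=\{q\in\Omega:c_j\rho(q)<-M_j\}$, with parameters arranged so that $\omega_j\Subset\Omega_{\varepsilon_j}$ and the inequality $\varphi^{(\varepsilon_j)}\leq -M_j$ holds on the compact level set $\partial\omega_j$. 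Defining
$$\varphi_j:=\begin{cases}\max\bigl(\varphi^{(\varepsilon_j)},c_j\rho\bigr)&\text{on }\omega_j,\\ c_j\rho&\text{on }\Omega\setminus\omega_j,\end{cases}$$
Proposition \ref{p1.1}(4) (applied with $u=c_j\rho$ on $\Omega$, $v=\varphi^{(\varepsilon_j)}$ on $\omega_j$) yields $\varphi_j\in PSH(\Omega)$. Continuity on $\overline{\Omega}$ comes from the matching $\varphi_j=-M_j=c_j\rho$ on $\partial\omega_j$ together with continuity of $c_j\rho$ on $\overline{\Omega}$; $\varphi_j|_{\partial\Omega}=0$ is immediate from $\rho|_{\partial\Omega}=0$; and $\mathrm{supp}(\triangle\varphi_j)^n\Subset\Omega$ because on a boundary neighborhood $\varphi_j$ coincides with $c_j\rho$, whose Monge--Amp\`ere mass is compactly supported in $\Omega$ by construction of $\rho$.

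The main obstacle is the inductive choice of the parameters $(\varepsilon_j,c_j,M_j)$ so that simultaneously (a) the gluing hypothesis $\varphi^{(\varepsilon_j)}\leq -M_j$ holds on $\partial\omega_j$, (b) $\varphi_j\searrow\varphi$ pointwise in $\Omega$, and (c) $\varphi_j$ is decreasing. Condition (b) forces $c_j\to\infty$ so that $c_j\rho\to-\infty$ pointwise, while (a) constrains $M_j$ to decrease fast: as $\omega_j$ exhausts $\Omega$, the level set $\partial\omega_j$ moves toward $\partial\Omega$ and $\sup_{\partial\omega_j}\varphi^{(\varepsilon_j)}\to 0$. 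Balancing these via a careful diagonal argument (mollify first, then pick $M_j$ small enough on the chosen $\partial\omega_j$, finally fix $c_j$), and comparing $\varphi_j$ with $\varphi_{j+1}$ on $\omega_j$ and $\Omega\setminus\omega_j$ separately to obtain monotonicity, is the technical heart of the proof and proceeds in direct analogy with the complex case \cite{cegrell2004}.
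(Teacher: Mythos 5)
Your overall architecture (first build a ``good'' exhaustion, then run Cegrell's mollify--and--glue scheme) is the same as the paper's, which constructs such an exhaustion and then defers the gluing to \cite[Theorem 2.1]{cegrell2004}. However, there are two genuine gaps. First, you assert that ``hyperconvexity gives a continuous negative exhaustion $\rho_0$''. With the definition used in this paper, hyperconvexity only provides a negative PSH exhaustion, which is merely upper semicontinuous; producing a \emph{continuous} one is precisely the nontrivial content of the paper's proof. The paper obtains it from the relatively extremal function $u_{\bar{B},\Omega}$ of a closed ball: Lemma \ref{l1.1} proves continuity in $\Omega$ granted that $u_{K,\Omega}^{*}\equiv-1$ on $K$, and Corollary \ref{c1.1} verifies this hypothesis for balls via the explicit formula for $u_{\bar{B}(a,r),B(a,R)}$. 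This extremal function also satisfies $(\triangle u_{\bar{B},\Omega})^n=0$ off $\bar{B}$ and $C\rho\le u_{\bar{B},\Omega}\le 0$, so continuity, zero boundary values and compactly supported Monge--Amp\`ere mass all come in one stroke. Your alternative (a Perron-envelope modification of $\rho_0$ on a collar) would additionally require a barrier argument for continuity of the envelope at the inner interface $\partial K$ and a proof that maximality kills the quaternionic Monge--Amp\`ere measure there; the Dirichlet-type results available in this setting are stated for balls, so this route needs real work that your sketch does not supply.

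Second, the gluing of $\max\bigl(\varphi^{(\varepsilon_j)},c_j\rho\bigr)$ exactly as you wrote it cannot in general satisfy both your condition (a) and pointwise convergence. Take $\Omega=B(0,1)\subset\mathbb{H}^n$, $\rho=\|q\|^{2}-1$ and $\varphi(q)=1-\|q\|^{-2}$, a negative PSH function with a pole at the origin. Convergence of $\varphi_j(0)$ to $\varphi(0)=-\infty$ forces $c_j\to+\infty$, but on $\{\|q\|\ge 1/2\}$ one has $-\varphi^{(\varepsilon_j)}\le-\varphi\le 4(-\rho)$, so the gluing requirement $\varphi^{(\varepsilon_j)}\le c_j\rho$ on $\partial\omega_j$ caps $c_j\le 4$ on every shell close enough to $\partial\Omega$; since the $\omega_j$ must exhaust $\Omega$, no choice of $(\varepsilon_j,M_j,c_j)$ reconciles the two constraints, so the ``careful diagonal argument'' you appeal to does not exist for this construction. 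The standard repair is to glue $\max\bigl(\varphi^{(\varepsilon_j)}-\eta_j,\,c_j\rho\bigr)$ with constants $\eta_j\downarrow 0$: the shift gives $-\varphi^{(\varepsilon_j)}+\eta_j\ge\eta_j$ on $\partial\omega_j$, so pushing the shell toward $\partial\Omega$ permits arbitrarily large $c_j$, while $\varphi^{(\varepsilon_j)}-\eta_j$ still decreases to $\varphi$ after interlacing the parameters. With these two repairs your argument becomes an accurate account of what the paper (and Cegrell) actually do.
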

In order to prove this result, we need the relatively extremal function $   u_{E,\Omega}$ defined in \cite{wan8} as
\beq\label{u*}  u_{E,\Omega}(q)=\sup\{u(q):u\in PSH(\Omega),u\leq0,u|_E\leq -1\},~~~ q\in \Omega,\eeq for a subset $E\Subset\Omega$.
If $E_{1}\subset
E_{2}\subset\Omega_{1}\subset\Omega_{2}$, then
\begin{equation}\label{1.2}u_{E_{1},\Omega_{1}}\geq u_{E_{2},\Omega_{1}}\geq
u_{E_{2},\Omega_{2}}.\end{equation}
Its upper semicontinuous regularization $u_{E,\Omega}^*\in PSH(\Omega)$ (by Proposition \ref{p1.1}) satisfies $-1\leq u_{E,\Omega}^*\leq0$ in $\Omega$ and $(\triangle u_{E,\Omega}^*)^n=0$ on $\Omega\setminus \overline{E}$ for any $E\Subset \Omega$. This function was defined in the same way as the relatively extremal function given by Demailly \cite{Demailly1991} for the complex case (see also \cite{klimek} for the detailed discussion).

 \begin{lem}\label{l1.1}Suppose that $K\subset\Omega$ is
a compact set such that $u_{K,\Omega}^{*}\equiv-1$ on $K$. Then
$u_{K,\Omega}$ is continuous in $\Omega$.
\end{lem}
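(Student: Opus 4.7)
The plan is to establish continuity of $u:=u_{K,\Omega}$ by proving that its standard mollifications $u_\varepsilon := u*\rho_\varepsilon$ converge to $u$ locally uniformly on $\Omega$; since each $u_\varepsilon$ is continuous, this forces $u$ itself to be continuous.

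Two preparatory steps come first. I would show $u\equiv u^*_{K,\Omega}$ on all of $\Omega$: by Proposition \ref{p1.1}(3), $u^*_{K,\Omega}\in PSH(\Omega)$; it is non-positive by construction; and by hypothesis it equals $-1$ on $K$. Hence it is itself admissible in the supremum \eqref{u*}, giving $u^*_{K,\Omega}\leq u_{K,\Omega}$, and the reverse inequality is trivial. So $u$ is $PSH$, USC, takes values in $[-1,0]$, and equals $-1$ on $K$ (the lower bound coming from the fact that the constant $-1$ is admissible). Next I would deduce $\lim_{q\to\partial\Omega}u(q)=0$ from hyperconvexity: if $\rho\in PSH^-(\Omega)$ is an exhaustion and $m:=\sup_K\rho<0$, then $\rho/|m|$ is admissible in \eqref{u*}, so $u\geq \rho/|m|$; combined with $u\leq 0$ and $\rho\to 0$ at $\partial\Omega$ this yields the boundary values.

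The main step is a comparison trick. Fix $\eta>0$. Since $u$ is USC and $u\equiv -1$ on the compact set $K$, a finite open covering argument produces a neighbourhood $V\supset K$ on which $u<-1+\eta$. For all $\varepsilon$ small enough that the $\varepsilon$-neighbourhood of $K$ is contained in $V$, the mollification satisfies $u_\varepsilon<-1+\eta$ on $K$, so $v_\varepsilon:=u_\varepsilon-\eta$ is smooth, $PSH$ on $\Omega_\varepsilon$, negative, and strictly less than $-1$ on $K$. Using the boundary decay $u\to 0$, the sublevel set $\{u\leq -\eta\}$ is relatively compact in $\Omega$, so for $\varepsilon$ further shrunken we have $u>-\eta\geq v_\varepsilon$ at every point of $\partial\Omega_\varepsilon$. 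Proposition \ref{p1.1}(4) then glues $\max(v_\varepsilon,u)$ on $\Omega_\varepsilon$ with $u$ on $\Omega\setminus\Omega_\varepsilon$ into a function $\tilde v_\varepsilon\in PSH^-(\Omega)$; since $\tilde v_\varepsilon\leq -1$ on $K$, it is admissible in \eqref{u*} and hence $\tilde v_\varepsilon\leq u$ on $\Omega$. Evaluating at any $q\in\Omega_\varepsilon$ gives $u_\varepsilon(q)-\eta\leq u(q)$, which combined with the trivial $u\leq u_\varepsilon$ gives $\|u_\varepsilon-u\|_{L^\infty(\Omega_\varepsilon)}\leq\eta$, the desired locally uniform convergence.

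The main obstacle is this gluing step: it relies simultaneously on the boundary decay of $u$ (supplied by hyperconvexity) and on a uniform bound $u<-1+\eta$ on a neighbourhood of $K$. This second ingredient is where the hypothesis $u^*_{K,\Omega}\equiv -1$ on $K$ is indispensable, since without it $u$ need not be USC on $\Omega$ and the compactness argument producing the uniform bound would break down.
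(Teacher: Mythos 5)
Your proof is correct and follows essentially the same strategy as the paper: build an admissible competitor for the supremum defining $u_{K,\Omega}$ out of the mollification shifted down by a small constant and glued near $\partial\Omega$, conclude that the competitor lies below $u$, and hence that the continuous regularizations converge locally uniformly to $u$. The only differences are cosmetic — you glue with $u$ itself (after first establishing $u=u^*$ and the boundary decay) where the paper glues with the exhaustion function $\rho$, and you replace the paper's Dini-type step on $K$ by the direct observation that upper semicontinuity gives $u<-1+\eta$ on a neighbourhood of $K$.
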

\proof Denote $u=u_{K,\Omega}$. Let $\rho$ be an exhaustion function of $\Omega$ such that $\rho<-1$ on
$K$, then $\rho\leq u$ in $\Omega$. For each $\varepsilon\in(0,1)$,
there exists $\eta>0$ such that
$u-\frac{\varepsilon}{2}\leq-\frac{\varepsilon}{2}<\rho$ in
$\Omega\setminus\Omega_{\eta}$ and $K\subset\Omega_{\eta}$, where
$\Omega_{\eta}=\{q\in\Omega:dist(q,\partial\Omega)>\eta\}.$ Note that $u^{*}$ is PSH, thus is upper
semi-continuous. Denote by $u_{\delta}$ the standard regularization of
$u$, defined on $\Omega_{\delta}$, then $(u^{*})_{\delta}$
decreasingly converges to $u^{*}$ as $\delta\rightarrow0$ by Proposition \ref{p1.1}. Since
$u^{*}=u$ almost everywhere in $\Omega$, we have $(u^{*})_{\delta}=u_{\delta}$.
By Dini theorem, $u_{\delta}$ converges to $u^{*}$ uniformly on
any compact set. Therefore we can find a uniform $\delta>0$ such
that $u_{\delta}-\varepsilon<u^{*}-\frac{\varepsilon}{2}<\rho$ on
$\partial\Omega_{\eta}$ and $u_{\delta}-\varepsilon<-1$ on $K$.
Define\begin{equation*}g_{\varepsilon}=\begin{cases}\rho,\,\,\,\,\,\,\,\,\,\,\,\,\,\,\,\,\,\,\,\,\,\,\,\,\,\,\,\,\,\,\,\,\,\,\,\,\,\,\,\,\,\,\text{in}\,\,\Omega\setminus\Omega_{\eta},\\
\max\{u_{\delta}-\varepsilon,\rho\},\,\,\,\,\,\,\,\text{in}\,\,\Omega_{\eta}.
\end{cases}
\end{equation*}Then $g_{\varepsilon}\in PSH\cap C(\Omega)$ by
Proposition \ref{p1.1} (4) and $g_{\varepsilon}<-1$ on $K$. Thus
$g_{\varepsilon}\leq u$ in $\Omega$. On the other hand,
$u-\varepsilon\leq\max\{u_{\delta}-\varepsilon,\rho\}\leq
g_{\varepsilon}$ in $\Omega$. It follows that $u$ is
continuous in $\Omega$.
\endproof

\begin{cor}\label{c1.1}Suppose that $K\subset\Omega$
is compact and is the union of a family of closed balls. Then
$u_{K,\Omega}=u_{K,\Omega}^{*}$ is continuous in $\Omega$.
\end{cor}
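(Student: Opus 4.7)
The plan is to invoke Lemma \ref{l1.1} once the hypothesis $u_{K,\Omega}^{*}\equiv -1$ on $K$ is verified. Continuity of $u_{K,\Omega}$ on $\Omega$ will then follow, and since a continuous function coincides with its upper-semicontinuous regularization we also get $u_{K,\Omega}=u_{K,\Omega}^{*}$. The lower bound $u_{K,\Omega}^{*}\geq -1$ on $\Omega$ is trivial, since the constant $-1$ is an admissible competitor in (\ref{u*}), so only $u_{K,\Omega}^{*}\leq -1$ on $K$ is non-trivial. Here I would localize to a single ball using the monotonicity (\ref{1.2}): for any $q\in K$, pick a closed ball $\bar B(q_{0},r)\subset K$ with $q\in\bar B(q_{0},r)$; then $u_{K,\Omega}^{*}(q)\leq u_{\bar B(q_{0},r),\Omega}^{*}(q)$, so it suffices to prove $u_{\bar B(q_{0},r),\Omega}^{*}\equiv -1$ on $\bar B(q_{0},r)$ for any closed ball $\Subset\Omega$.

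For the single-ball statement I would construct an explicit continuous plurisubharmonic barrier. The function $|q-q_{0}|^{2}$ is smooth quaternionic plurisubharmonic on $\mathbb{H}^{n}$ (its restriction to each right quaternionic line through $q_{0}$ is a constant multiple of the standard squared norm on $\mathbb{H}\cong\mathbb{R}^{4}$, hence subharmonic). Since $\Omega$ is bounded, we may choose $C>0$ so small that $C(|q-q_{0}|^{2}-r^{2})\leq 1$ on $\Omega$, and then
\begin{equation*}
v(q):=\max\bigl(C(|q-q_{0}|^{2}-r^{2})-1,\;-1\bigr)
\end{equation*}
lies in $PSH\cap C(\Omega)$ by Proposition \ref{p1.1}(1), is $\leq 0$ on $\Omega$, and equals $-1$ on $\bar B(q_{0},r)$ (while being $>-1$ outside). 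Thus $v$ is a competitor, giving $u_{\bar B(q_{0},r),\Omega}\geq v$; combined with the trivial $u_{\bar B(q_{0},r),\Omega}\leq -1$ on $\bar B(q_{0},r)$ (each competitor is $\leq -1$ there) this already yields $u_{\bar B(q_{0},r),\Omega}\equiv -1$ on $\bar B(q_{0},r)$.

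The main obstacle is to promote this equality to the regularization $u_{\bar B(q_{0},r),\Omega}^{*}$, i.e., to rule out a jump at boundary points of $\bar B(q_{0},r)$. On the open ball this is automatic, since the function is identically $-1$ on a full neighborhood of any interior point. At points of $\partial\bar B(q_{0},r)$ the control has to come from the complement, where $(\triangle u_{\bar B(q_{0},r),\Omega}^{*})^{n}=0$. Here I would use the continuous barrier $v$, which satisfies $v(q)=-1$ at every $q\in\partial\bar B(q_{0},r)$ and $v\leq u_{\bar B(q_{0},r),\Omega}^{*}\leq 0$, together with the quaternionic comparison principle (Lemma \ref{t2.2}) applied on the maximal function $u_{\bar B(q_{0},r),\Omega}^{*}$ in $\Omega\setminus\bar B(q_{0},r)$, in the style of the classical regularity of closed balls in complex pluripotential theory. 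This is the quaternionic analogue of the non-plurithinness of balls, and I expect the detailed verification to draw on the pluripotential machinery developed in \cite{wan4,wan8}. Once this regularity statement is settled, the reduction above together with Lemma \ref{l1.1} completes the proof.
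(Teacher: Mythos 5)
Your reduction to a single closed ball via the monotonicity (\ref{1.2}), followed by an appeal to Lemma \ref{l1.1}, is exactly the paper's strategy, and your competitor $v$ does prove $u_{\bar B(q_0,r),\Omega}\equiv -1$ on $\bar B(q_0,r)$. But the argument stops at the one step that carries all the content: showing that the \emph{regularization} is still $-1$ at points $b\in\partial\bar B(q_0,r)$, i.e.\ that $\limsup_{q\to b}u_{\bar B(q_0,r),\Omega}(q)\leq -1$ as $q\to b$ from outside the ball. Your barrier $v$ cannot help there: it satisfies $v\leq u_{\bar B,\Omega}$ and is $>-1$ off the ball, so it only reproduces the trivial lower bound $u^{*}\geq -1$; what is needed is an \emph{upper} majorant of $u_{\bar B,\Omega}$ that is continuous and equals $-1$ on $\partial\bar B$. (Each individual competitor $w$ is upper semicontinuous with $w(b)\leq -1$, but the supremum of the family need not be — that is precisely why the claim is nontrivial.) Invoking Lemma \ref{t2.2} on $\Omega\setminus\bar B$ is circular, since its hypothesis demands boundary control on $\partial\bar B$, the very thing to be proved.

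The paper supplies the missing majorant explicitly: by (\ref{1.2}), $u_{K,\Omega}\leq u_{\bar B(a,r),\Omega}\leq u_{\bar B(a,r),B(a,R)}$ on $B(a,R)$, and
$$u_{\bar B(a,r),B(a,R)}(q)=\max\left\{-1,\ \frac{R^{-2}-\|q-a\|^{-2}}{r^{-2}-R^{-2}}\right\},$$
a continuous function equal to $-1$ exactly on $\bar B(a,r)$, which forces $\lim_{q\to b}u_{K,\Omega}(q)=-1$ for every $b\in\partial K$. The essential ingredient is the radial maximal PSH function $-\|q-a\|^{-2}$ (the quaternionic analogue of $\log\|z-a\|$), which makes the displayed function both a competitor and a majorant of every competitor on the annulus; your quadratic $|q-q_0|^{2}$ cannot serve in this direction. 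So the proposal is incomplete: the ``main obstacle'' you flag is in fact the whole proof, and the sketch you offer for it would not go through as written.
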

\begin{proof}By Lemma \ref{l1.1}, it suffices to prove that $u_{K,\Omega}$
is continuous at $\partial K$. Let $b\in\partial K$,
and choose $a\in K$, $R>r>0$ such that $b\in\bar{B}(a,r)\subset
K$ and $B(a,R)\subset\Omega$. Then by (\ref{1.2}) we have
$u_{K,\Omega}(q)\leq u_{\bar{B}(a,r),\Omega}(q)\leq
u_{\bar{B}(a,r),B(a,R)}(q)$ for $q\in B(a,R)$. It follows from the
fact:
\begin{equation*}u_{\bar{B}(a,r),B(a,R)}(q)=\max\left\{-1,\frac{R^{-2}-\|q-a\|^{-2}}{r^{-2}-R^{-2}}\right\}
\end{equation*}that $\lim_{q\rightarrow b}u_{K,\Omega}(q)=-1$.
\end{proof}

\emph{Proof of Theorem \ref{t1.1}}. By Corollary \ref{c1.1}, $u_{\bar{B},\Omega}$ is
continuous in $\Omega$. We have $(\triangle u_{\bar{B},\Omega})^n=0$ in $\Omega\setminus
\bar{B}$ (cf. \cite[Proposition 3.2]{wan8}), thus $\text{supp}~(\triangle u_{\bar{B},\Omega})^n\Subset \Omega$. Let $\rho$ be an exhaustion function of
$\Omega$ such that $C\rho\leq-1$ in $\bar{B}$. Then $C\rho\leq
u_{\bar{B},\Omega}\leq0$. It follows from $\rho(q)\rightarrow0$, $q\rightarrow\partial\Omega$, that $u_{\bar{B},\Omega}(q)\rightarrow0$. Then we can follow the lines in \cite[Theorem 2.1]{cegrell2004} to complete the proof.
\qed

The following lemma implies that continuous functions in the class $\mathcal {E}_0(\Omega)$ can serve as ``test functions". One can prove this result in the same way as the complex $PSH$ case.
\b{lem}\label{l3.2} $C_0^\infty(\Omega)\subset \mathcal {E}_0\cap C(\overline{\Omega})-\mathcal {E}_0\cap C(\overline{\Omega}).$
\e{lem}

In \cite{wan4} we defined for $u,v\in C^2(\Omega)$,
\begin{equation}\label{6.1}\gamma( u,v):=\frac{1}{2}(d_0u\wedge d_1v-d_1u\wedge d_0v).
\end{equation} 
In \cite{wan7} we proved that when all functions are locally bounded, the mixed product $\gamma(
u, u)\wedge\triangle
w^1\wedge\ldots\wedge \triangle
w^k$
is well defined as a positive $(2k+2)$-current. Moreover, the Chern-Levine-Nirenberg type estimate holds for the positive current $\gamma(
u, u)\wedge\triangle
w^1\wedge\ldots\wedge \triangle
w^k$, which is also continuous on decreasing sequences. And we obtained some properties of the well defined current $\gamma(u,v)\wedge\triangle
w^1\wedge\ldots\wedge \triangle w^k.$ These results enable us to integrate by parts and then get some useful estimates. See Proposition 3.1 in \cite{wan7} for the detailed discussion.

\begin{lem}\label{p6.1}(Corollary 3.1 in \cite{wan7})
Let $u,v,w^1,\ldots,w^{n-1}\in PSH\cap L_{loc}^\infty(\Omega)$. Then
\begin{equation*}\label{6.6}\int_\Omega\gamma( u,
v)\wedge T\leq\left(\int_\Omega\gamma(
u, u)\wedge T\right)^\frac{1}{2}\left(\int_\Omega\gamma( v,v)\wedge T\right)^\frac{1}{2},
\end{equation*}where $T=\triangle
w^1\wedge\ldots\wedge \triangle
w^{n-1}$.\end{lem}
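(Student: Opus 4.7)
The plan is to deduce the inequality as the Cauchy--Schwarz estimate attached to the positive semi-definite symmetric bilinear form $B(f,g):=\int_\Omega \gamma(f,g)\wedge T$.

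First I would record the algebraic properties of $\gamma$. Bilinearity is immediate from the linearity of $d_0$ and $d_1$. Symmetry holds because $d_0 g$ and $d_1 f$ are $1$-forms in the complex exterior algebra, so $d_0 g\wedge d_1 f=-d_1 f\wedge d_0 g$, whence $\gamma(g,f)=\gamma(f,g)$. In particular, for smooth $f$ one has $\gamma(f,f)=d_0 f\wedge d_1 f$.

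The key analytic input is positive semi-definiteness: for every smooth real-valued $f$, the $(2,0)$-form $d_0 f\wedge d_1 f$ is positive in the quaternionic sense of \cite{wan3}, exactly as $\partial f\wedge\overline{\partial} f$ is a positive $(1,1)$-form for real $f$ in the complex setting. Crucially, this positivity does not require $f$ to be PSH. Granting this, $\gamma(f,f)\wedge T$ is a positive current whenever $T=\triangle w^{1}\wedge\ldots\wedge\triangle w^{n-1}$ is built from locally bounded PSH functions, so $B(f,f)\geq 0$ for every smooth $f$.

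With positive semi-definiteness in hand, the rest is the classical discriminant trick. Assume first that $u,v$ are smooth PSH. For any real $s$, bilinearity and symmetry yield
$$0\leq B(u-sv,u-sv)=B(u,u)-2sB(u,v)+s^{2}B(v,v),$$
so the quadratic on the right has non-positive discriminant and $B(u,v)^{2}\leq B(u,u)B(v,v)$. Since Proposition~3.1 of \cite{wan7} gives $B(u,v)\geq 0$ for PSH $u,v$, taking square roots yields the stated inequality. For general $u,v\in PSH\cap L^{\infty}_{loc}(\Omega)$ one approximates by the standard regularizations $u_{\varepsilon}=u*\rho_{\varepsilon}$ and $v_{\varepsilon}=v*\rho_{\varepsilon}$, which are smooth PSH on subdomains and decrease to $u,v$; continuity of the mixed products $\gamma(\cdot,\cdot)\wedge T$ along decreasing sequences (the quaternionic counterpart developed in \cite{wan7} of Lemma~\ref{t3.2}(1)) then allows passing to the limit on both sides.

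The main obstacle I expect is verifying the pointwise positivity of $d_0 f\wedge d_1 f$ for an arbitrary smooth real $f$. In the complex case this is a one-line check; in the quaternionic case one must compare the definition of positive $(2,0)$-forms in $\wedge^{2}\mathbb{C}^{2n}$ from \cite{wan3} with the explicit expression of $d_0 f\wedge d_1 f$ in terms of the operators $\nabla_{k\alpha}$. Once that positivity is confirmed---and it is implicit in the framework built in \cite{wan3,wan4,wan7}---the remainder is just the standard Cauchy--Schwarz argument.
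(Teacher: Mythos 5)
The paper does not actually prove this lemma: it is quoted verbatim as Corollary~3.1 of \cite{wan7}, so there is no in-paper argument to compare yours against. That said, your proof is the standard one for inequalities of this type and is essentially correct: $\gamma$ is symmetric because $1$-forms in $\wedge^{\bullet}\mathbb{C}^{2n}$ anticommute, $\gamma(f,f)=d_0f\wedge d_1f$, and the discriminant trick applied to $B(u-sv,u-sv)\geq 0$ gives $|B(u,v)|\leq B(u,u)^{1/2}B(v,v)^{1/2}$, which already implies the stated one-sided bound without needing $B(u,v)\geq 0$ (so your appeal to Proposition~3.1 of \cite{wan7} for that sign is harmless but unnecessary). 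The positivity of $d_0f\wedge d_1f\wedge T$ for a general smooth real $f$, which you correctly single out as the crux, is indeed taken for granted elsewhere in this very paper --- see the line ``the last inequality follows from the positivity of $d_0(h-u)\wedge d_1(h-u)$'' in the proof of Lemma~\ref{blocki}, where $h-u$ is not PSH --- so you are not assuming more than the authors do. The only step I would tighten is the limiting argument: the mixed current $\gamma(u,v)\wedge T$ is not positive, so you cannot pass to the limit in it by semicontinuity alone; instead apply Cauchy--Schwarz to the smooth regularizations on a fixed relatively compact subdomain, enlarge the domain on the right-hand side using positivity of the diagonal terms, then invoke the convergence results of \cite{wan7} (or polarize $\gamma(u,v)$ via $\gamma(u+v,u+v)-\gamma(u,u)-\gamma(v,v)$, each term being a positive current continuous under decreasing limits) and finally exhaust $\Omega$. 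With that bookkeeping made explicit, the argument is complete.
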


\b{pro}\label{p3.1} Suppose that $u,v\in PSH^-\cap L_{loc}^\infty(\Omega)$ and that $T$ is a closed positive $(2n-2)$-current. If $\lim_{q\rightarrow\partial\Omega}u(q)=0$, then $$\int v\triangle u\wedge T\leq \int u\triangle v\wedge T.$$ If in addition $\lim_{q\rightarrow\partial\Omega}v(q)=0$, integration by parts is allowed:
\beq\label{integration by parts}\int v\triangle u\wedge T= \int u\triangle v\wedge T=\int -\gamma(u,v)\wedge T.
\eeq
\e{pro}
\proof First noting that for $\varphi\in C_0^\infty(\Omega)$, by the inductive definition (\ref{3.111}) we have \beq\label{3.2}\int \varphi\triangle v\wedge T=\int v\triangle \varphi\wedge T .\eeq
Hence the first identity of (\ref{integration by parts}) is true with $\varphi\in C_0^\infty(\Omega)$ in place of $u$. Now for $u\in PSH^-\cap L_{loc}^\infty(\Omega)$, we denote $u_\varepsilon=\max\{u,-\varepsilon\}$. Then $u-u_\varepsilon=\min\{0,u+\varepsilon\}$ is a compactly supported function decreasing uniformly to $u$ as $\varepsilon\rightarrow0$. So $$\int u\triangle v\wedge T=\lim_{\varepsilon\rightarrow0}\int (u-u_\varepsilon)\triangle v\wedge T.$$
And by (\ref{3.2}) we have $$\int (u-u_\varepsilon)\triangle v\wedge T=\lim_{\delta\rightarrow0}\int (u-u_\varepsilon)_\delta\triangle v\wedge T=\lim_{\delta\rightarrow0}\int v \triangle (u-u_\varepsilon)_\delta\wedge T,$$ where $(u-u_\varepsilon)_\delta$ denotes the standard regularization of $u-u_\varepsilon$ such that $(u-u_\varepsilon)_\delta$ decreases to $u-u_\varepsilon$ as $\delta$ tends to $0$. Fix an open set $\Omega'\Subset\Omega$ such that the set $\{u<-\varepsilon\}\Subset \Omega'$, then the support of $\triangle (u-u_\varepsilon)_\delta$ is in $\Omega'$ for $\delta$ small enough. Note that $(u_\varepsilon)_\delta$ is also plurisubharmonic by Proposition \ref{p1.1}, thus $\triangle (u_\varepsilon)_\delta\wedge T\geq0$. It follows from Lemma \ref{t3.2} that \ben\b{aligned}\int (u-u_\varepsilon)\triangle v\wedge T=&\lim_{\delta\rightarrow0}\int_{\Omega'} v \triangle (u-u_\varepsilon)_\delta\wedge T\geq \varlimsup_{\delta\rightarrow0}\int_{\Omega'} v \triangle u_\delta\wedge T\\=&\int_{\Omega'} v \triangle u\wedge T\rightarrow \int_{\Omega} v \triangle u\wedge T~~~~\text{as}~\Omega'\rightarrow\Omega.\e{aligned}\een
Let $\varepsilon\rightarrow0$ to get $\int v\triangle u\wedge T\leq \int u\triangle v\wedge T.$

To show (\ref{integration by parts}), it suffices to prove the second identity of (\ref{integration by parts}) for the smooth case and repeat the above approximation process for the general case. Since $T$ is closed, by (\ref{2.228}) (\ref{2.37}) and Stokes-type formula (Lemma \ref{p3.9}) we have
\ben \b{aligned}\int v \triangle u\wedge T=& \int u\triangle v\wedge T=\frac{1}{2}\int u(d_0d_1-d_1d_0) v\wedge T\\=&\frac{1}{2}\int ud_0(d_1 v\wedge T)-\frac{1}{2}\int ud_1(d_0 v\wedge T)\\=&-\frac{1}{2}\int d_0u\wedge d_1 v\wedge T+\frac{1}{2}\int d_1u\wedge d_0 v\wedge T=-\int\gamma(u,v)\wedge T.
\e{aligned}\een
\endproof

\b{pro}\label{l3.1} Suppose that $h,u_1,u_2,v_1,\ldots,v_{n-p-q}\in \mathcal {E}_0(\Omega)$, $1\leq p,q<n$. Let $T=\triangle v_1\wedge\ldots\wedge\triangle v_{n-p-q}$. Then
\ben \label{3.1}\b{aligned}&\int -h(\triangle u_1)^p\wedge (\triangle u_2)^q\wedge T\\
\leq &\left[\int -h(\triangle u_1)^{p+q}\wedge T\right]^{\frac{p}{p+q}}\left[\int -h(\triangle u_2)^{p+q}\wedge T\right]^{\frac{q}{p+q}}.
\e{aligned}\een
\e{pro}
\proof We first prove this conclusion for the case $p=q=1$. By Proposition \ref{p3.1} and Lemma \ref{p6.1} we have\ben\b{aligned}&\int -h\triangle u_1\wedge \triangle u_2\wedge T=\int -u_1\triangle u_2\wedge \triangle h\wedge T=\int \gamma(u_1, u_2)\wedge \triangle h\wedge T\\&\qquad\leq \left[\int \gamma(u_1, u_1)\wedge \triangle h\wedge T\right]^{1/2}\left[\int \gamma(u_2, u_2)\wedge \triangle h\wedge T\right]^{1/2}\\&\qquad=\left[\int -u_1\triangle u_1\wedge \triangle h\wedge T\right]^{1/2}\left[\int -u_2\triangle u_2\wedge \triangle h\wedge T\right]^{1/2}\\&\qquad=\left[\int -h(\triangle u_1)^2\wedge T\right]^{1/2}\left[\int -h(\triangle u_2)^2\wedge T\right]^{1/2}.
\e{aligned}\een  Then one can complete the proof by following the inductive process in \cite[Lemma 5.4]{cegrell2004}.
\endproof

\b{cor}\label{c3.1} Suppose that $h,u_1,\ldots,u_n\in \mathcal {E}_0(\Omega)$. Then
\ben \int -h\triangle u_1\wedge \ldots\wedge \triangle u_n
\leq \left[\int -h(\triangle u_1)^n\right]^{\frac{1}{n}} \ldots\left[\int -h(\triangle u_n)^n\right]^{\frac{1}{n}}.
\een\end{cor}

With the necessary ingredients (Theorem \ref{t1.1}, Lemma \ref{l3.2} and Proposition \ref{p3.1}), we are to show that the quaternionic Monge-Amp\`ere operator is well defined for functions in $\mathcal {E}(\Omega)$ and $\mathcal {E}_p(\Omega)$, and that integration by parts is allowed in $\mathcal {F}(\Omega)$ and $\mathcal {E}_p(\Omega)$, by following the ideas of Cegrell \cite{cegrell1998,cegrell2004}.

\b{thm}\label{t4.1} Suppose that $u^k\in \mathcal {E}(\Omega)$ and $\{g^k_{j}\}_{j=1}^\infty\subset \mathcal {E}_0(\Omega)$ decreases to $u^k$ as $j\rightarrow+\infty$, $k=1,\ldots,n$. Then the sequence of measures $\triangle g^1_{j}\wedge\ldots \wedge \triangle g^n_{j}$ converges weakly to a positive measure which does not depend on the choice of the sequences $\{g^k_{j}\}_{j=1}^\infty$. We then define $\triangle u^1\wedge \ldots \wedge \triangle u^n$ to be this weak limit.
\e{thm}
\proof See Theorem 4.2 in \cite{cegrell2004}.\endproof

\b{pro}\label{p3.3} Suppose that $u^k\in \mathcal {F}(\Omega)$, $k=1,\ldots,n$, and $\{g^k_{j}\}_{j=1}^\infty\subset\mathcal {E}_0(\Omega)$ decreases to $u^k$, $j\rightarrow+\infty$, such that $$\sup_{j,k}\int_\Omega(\triangle g^k_{j})^n<+\infty.$$ Then for each $h\in PSH^-(\Omega)$ we have
\ben \lim_{j\rightarrow +\infty}\int_\Omega h \triangle g^1_{j}\wedge\ldots \wedge \triangle g^n_{j}=\int_\Omega h \triangle u^1\wedge \ldots \wedge \triangle u^n.
\een
\e{pro}
\proof Assume first that $h\in  \mathcal {E}_0\cap C(\Omega)$. Fix $\varepsilon>0$ small enough and consider $h_\varepsilon:=\max\{h,-\varepsilon\}$. The function $h-h_\varepsilon$ is continuous and compactly supported in $\Omega$. By Theorem \ref{t4.1} we have
$$ \lim_{j\rightarrow +\infty}\int_\Omega (h-h_\varepsilon) \triangle g^1_{j}\wedge\ldots \wedge \triangle g^n_{j}=\int_\Omega (h-h_\varepsilon) \triangle u^1\wedge \ldots \wedge \triangle u^n. $$ Noting that $|h_\varepsilon|\leq \varepsilon$ and $\sup_j\int  \triangle g^1_{j}\wedge\ldots \wedge \triangle g^n_{j}<+\infty,$ we get the result for $h\in  \mathcal {E}_0\cap C(\Omega)$.

Now suppose that $h\in PSH^-(\Omega)$ and that $\int_\Omega h \triangle u^1\wedge \ldots \wedge \triangle u^n$ is finite. By Theorem \ref{t1.1} we can choose $h_j\in \mathcal {E}_0\cap C(\Omega)$ decreasing to $h$ and get the result. If $\int_\Omega h \triangle u^1\wedge \ldots \wedge \triangle u^n=-\infty$, then $\lim_{j\rightarrow +\infty}\int_\Omega h \triangle g^1_{j}\wedge\ldots \wedge \triangle g^n_{j}=-\infty$.
\endproof

\b{cor} \label{c4.1} Suppose that $\{g_{j}\}_{j=1}^\infty\subset\mathcal {E}_0(\Omega)$ decreases to $u\in \mathcal {F}(\Omega)$, $j\rightarrow+\infty$, such that $$\sup_{j}\int_\Omega(\triangle g_{j})^n<+\infty.$$ Then for each $h\in \mathcal {E}_0(\Omega)$ we have the weak convergence
$ h (\triangle g_{j})^n\rightarrow h (\triangle u)^n. $
\e{cor}

\b{rem}\label{r4.1}
It follows from the definition of $\mathcal{F}$ and Proposition \ref{p3.3} that Corollary \ref{c3.1} also holds for $u_1,\ldots,u_n\in \mathcal {F}(\Omega)$. In particular, for $u_1,\ldots,u_n\in \mathcal {F}(\Omega)$, we have\beq \label{4.111}\int \triangle u_1\wedge \ldots\wedge \triangle u_n
\leq \left(\int (\triangle u_1)^n\right)^{\frac{1}{n}} \ldots\left(\int (\triangle u_n)^n\right)^{\frac{1}{n}}.
\eeq \e{rem}

The functions in $\mathcal{F}(\Omega)$ have boundary value zero in some sense, which can been seen in the following integration by parts formula.

\b{thm}\label{integrationthm} Let $u,v,w^1,\ldots,w^{n-1}\in \mathcal{F}(\Omega)$ and $T=\triangle w^1\wedge\ldots\wedge \triangle w^{n-1}$. Then
\ben \int u\triangle v\wedge T=\int v\triangle u\wedge T.\een
\e{thm}
\proof Let $u_j,v_j,w^1_j,\ldots,w^{n-1}_j$ be sequences in $\mathcal{E}_0(\Omega)\cap C(\Omega)$ decreasing to $u,v,w^1,$  $\ldots,w^{n-1}$ respectively such that their total masses are uniformly bounded:
\ben \sup_j \int \triangle v_j\wedge T_j<\infty,~~~\sup_j\int \triangle u_j\wedge T_j<\infty,~~~\text{where}~T_j=\triangle w^1_j\wedge \ldots \wedge \triangle w^{n-1}_j.\een
Theorem \ref{t4.1} and Proposition \ref{p3.1} imply that $\int h\triangle u_j\wedge T_j$ decreases to $ \int h\triangle u\wedge T$. Fix $k$, for any $j>k$ we have
$$\int v_k\triangle u_{k}\wedge T_k\geq \int v_k\triangle u_j\wedge T_j\geq \int v_j\triangle u_j\wedge T_j.$$
Thus the sequence $\int v_j\triangle u_j\wedge T_j$ decreases to some $a\in[-\infty,+\infty)$. It follows from Proposition \ref{p3.3} that $\int v_k\triangle u\wedge T\geq a$, thus $\int v\triangle u\wedge T\geq a$. On the other hand, we have $$\int v\triangle u\wedge T\leq \int v_k\triangle u\wedge T=\lim_j\int v_k\triangle u_j\wedge T_j\leq \int v_k\triangle u_k\wedge T_k.$$ It follows that $\int v\triangle u\wedge T=a$.
\endproof

As in the complex case, we define the \textit{quaternionic $p$-energy} of $\varphi\in\mathcal{E}_0(\Omega)$ to be
\ben\label{energy} E_p(\varphi):=\int_\Omega(-\varphi)^p(\triangle\varphi)^n
\een
and the \textit{mutual quaternionic $p$-energy} of $\varphi_0,\ldots,\varphi_n\in\mathcal{E}_0(\Omega)$ to be
\ben\label{m-energy} E_p(\varphi_0,\varphi_1,\ldots,\varphi_n):=\int_\Omega(-\varphi_0)^p\triangle\varphi_1\wedge\ldots\wedge\triangle\varphi_n,~~p\geq1.\een If $p=1$ we denote by $E(\varphi)=E_1(\varphi)$, and we say that $E(\varphi)$ is the \textit{quaternionic energy} of $\varphi\in \mathcal{E}_0(\Omega)$.
We get the following useful H\"older-type inequality by following the method of Persson \cite{persson} for the complex case. This energy estimate plays an important role in the variational approach in next section.
\b{thm}\label{energy estimate}(Energy estimate) Let $u,v^1,\ldots,v^n\in\mathcal{E}_0(\Omega)$ and $p\geq1$. We have
\ben E_p(u,v^1,\ldots,v^n)\leq D_p E_p(u)^{\frac{p}{n+p}}E_p(v^1)^{\frac{1}{n+p}}\ldots E_p(v^n)^{\frac{1}{n+p}},
\een where $D_1=1$, $D_p=p^{p\alpha(n,p)/p-1}$ for $p>1$ and $$\alpha(n,p)=(p+2)(\frac{p+1}{p})^{n-1}-(p+1).$$
\e{thm}

\proof By Theorem 4.1 in \cite{persson}, it is sufficient to prove that for $u,v,v^1,\ldots,v^{n-1}\in\mathcal{E}_0(\Omega)$,
\beq\label{holder}E_p(u,v,v^1,\ldots,v^{n-1})\leq C_p E_p(u,u,v^1,\ldots,v^{n-1})^{\frac{p}{p+1}}E_p(v,v,v^1,\ldots,v^{n-1})^{\frac{1}{p+1}},
\eeq where $C_1=1$ and $C_p=p^{\frac{p}{p-1}}$ for $p>1$. Let $T=
\triangle v^1\wedge\ldots\wedge\triangle v^{n-1}$. By Proposition \ref{p3.1} and Lemma \ref{p6.1} we have
\ben \b{aligned}\int (-u)\triangle v\wedge T=\int \gamma(u,v)\wedge T&\leq \left(\int \gamma(u,u)\wedge T\right)^{\frac{1}{2}}\left(\int \gamma(v,v)\wedge T\right)^{\frac{1}{2}}\\
&=\left(\int (-u)\triangle u\wedge T\right)^{\frac{1}{2}}\left(\int (-v)\triangle v\wedge T\right)^{\frac{1}{2}}.
\e{aligned}\een
This implies (\ref{holder}) in the case of $p=1$.

For $p>1$, by (\ref{2.228}) and Proposition \ref{p3.1} we can integrate by part to get
\ben \b{aligned}\int& (-u)^p\triangle v\wedge T=\int -\gamma((-u)^p,v)\wedge T=p\int (-u)^{p-1}\gamma(u,v)\wedge T\\ =&p\int (-v)\gamma((-u)^{p-1},u)\wedge T+p\int (-v)(-u)^{p-1}\triangle u\wedge T\\=&-p(p-1)\int (-v)(-u)^{p-2}\gamma(u,u)\wedge T+p\int (-v)(-u)^{p-1}\triangle u\wedge T\\\leq&p\int (-v)(-u)^{p-1}\triangle u\wedge T,
\e{aligned}\een where the last inequality follows from the fact that $\gamma(u,u)\wedge T$ is positive by Proposition 3.1 in \cite{wan7}. Therefore it follows from H\"older inequality that
\beq\label{4.51} \int (-u)^p\triangle v\wedge T\leq p\left[\int (-u)^p\triangle u\wedge T\right]^{\frac{p-1}{p}}\left[\int (-v)^p\triangle u\wedge T\right]^{\frac{1}{p}}.\eeq
By changing the roles of $u$ and $v$ in (\ref{4.51}), we get
\beq\label{4.52} \int (-v)^p\triangle u\wedge T\leq p\left[\int (-v)^p\triangle v\wedge T\right]^{\frac{p-1}{p}}\left[\int (-u)^p\triangle v\wedge T\right]^{\frac{1}{p}}.\eeq Combine (\ref{4.51}) with (\ref{4.52}) to get (\ref{holder}).
\endproof

From the above results, we can get the convexity of the classes $\mathcal{E}_0(\Omega),\mathcal{E}(\Omega),$ $\mathcal{F}(\Omega),\mathcal{E}_p(\Omega),\mathcal{F}_p(\Omega)$, in the same way as in \cite{cegrell1998,cegrell2004}. Moreover, the following results can be obtained by repeating the arguments in  \cite{cegrell1998,cegrell2004}.

\b{defi}\label{defi1}Denote by $\mathcal{K}\subset PSH^-(\Omega)$ such that:\\
(a). If $u\in \mathcal{K}$, $v\in PSH^-(\Omega)$, then $\max\{u,v\}\in \mathcal{K}$. \\
(b). If $u\in \mathcal{K}$, $\varphi_j\in PSH^-\cap L_{loc}^\infty(\Omega)$, $\varphi_j \searrow u$, $j\rightarrow +\infty$, then $(\triangle \varphi_j)^n$ is weakly convergent.
\e{defi}

\b{cor}\label{c4.3}(1) The classes $\mathcal{E}_0(\Omega),\mathcal{E}(\Omega),\mathcal{F}(\Omega),\mathcal{E}_p(\Omega),\mathcal{F}_p(\Omega)$, $p\geq1$, are convex and have property (a) of Definition \ref{defi1}. \\
(2) The class $\mathcal{E}(\Omega)$ has property (a) and (b) of Definition \ref{defi1}. If $\mathcal{K}$ has property (a) and (b), then $\mathcal{K}\subset \mathcal{E}.$ This means that $\mathcal{E}$ is the largest class satisfying property (a) and (b).
\e{cor}

The following result follows from the energy estimate theorem (Theorem \ref{energy estimate}) directly.
\b{cor}\label{estimatecor}Let $u,v\in \mathcal{E}_0(\Omega)$ and $u\leq v$. Then $E_p(v)\leq CE_p(u)$, where the constant $C$ is independent of $u,v$. In particular for $p=1$ we have $E(v)\leq E(u)$.
\e{cor}
As an application of the energy estimate theorem, we can get the following estimate of quaternionic Monge-Amp\`{e}re measure in terms of quaternionic $C_n$-capacity (introduced in \cite{wan7,wan4,wan8}) and the quaternionic $p$-energy. Lu \cite{lu} proved this result for the complex Hessian measure and $m$-subharmonic functions.
\b{cor} Let $U$ be an open subset of $\Omega$ and $\varphi\in  \mathcal{E}_0(\Omega)$, $p\geq1$. Then \ben \int_U(\triangle \varphi)^n\leq M C_n(U)^{\frac{p}{p+n}} E_p(\varphi)^{\frac{n}{p+n}},\een where $M$ is a constant depends only on $p$ and $n$.
\e{cor}
\proof We can assume that $U$ is relatively compact in $\Omega$. Denote by $u=u_U^*$ the regularized relatively extremal function of $U$ in $\Omega$. Then $u\in \mathcal{E}_0(\Omega)$. By Theorem \ref{energy estimate} and the definition of $C_n$-capacity we have
\ben \b{aligned} \int_U(\triangle \varphi)^n&\leq \int_\Omega (-u)^p(\triangle \varphi)^n\leq D_p E_p(u)^{\frac{p}{p+n}}E_p(\varphi)^{\frac{n}{p+n}}\\ &\leq  D_p \left(\int_{\Omega}(\triangle u)^n\right)^{\frac{p}{p+n}}E_p(\varphi)^{\frac{n}{p+n}}\leq
D_p C_n(U)^{\frac{p}{p+n}} E_p(\varphi)^{\frac{n}{p+n}}.
\e{aligned}\een\endproof

\b{thm}\label{t3.15} Suppose that $u^k\in \mathcal{E}_p(\Omega)$, $k=1,\ldots,n, ~p\geq1$, and $\{g_j^k\}_{j=1}^\infty\subset\mathcal{E}_0(\Omega)$ decreases to $u^k$, $j\rightarrow+\infty$, such that $$\sup_{j,k}E_p(g_j^k)<+\infty.$$ Then the sequence of measures $\triangle g_j^1\wedge\ldots\wedge\triangle g_j^n$ is weakly convergent to a positive measure and the limit is independent of the particular sequence. We then define $\triangle u^1\wedge\ldots\wedge\triangle u^n$ to be this weak limit.
\e{thm}
\proof Suppose first that $\sup_{j,k}(\triangle g_j^k)^n<+\infty.$ Then the conclusion follows from the proof of Theorem 4.2 in \cite{cegrell2004}. Hence it suffices to remove the restriction $\sup_{j,k}(\triangle g_j^k)^n<+\infty.$ Without loss of generality we can assume that all $g_j^k$ are continuous on $\Omega$. Let $K$ be a given compact subset of $\Omega$. Consider the function $$h_j^k:=\sup\left\{v\in PSH^- (\Omega): v\leq g_j^k ~\text{on}~K\right\}.$$ Then $h_j^k=h_j^{k*}\in PSH\cap L^\infty(\Omega)$ by Proposition \ref{p1.1}, $h_j^k$ decreases on $\Omega$ and $h_j^k\searrow u^k$ on $K$.
And $supp(\triangle h_j^k)^n\subset K$ (since $h_j^k$ is maximal in $\Omega\setminus K$ cf. \cite{wan4}). Note that $g_j^k\leq h_j^k$. It follows from Corollary \ref{estimatecor} that $\sup_{j,k}E_p(h_j^k)<+\infty$. Thus $h_j^k$ decreases to some function $v^k\in \mathcal{E}_p(\Omega)$. And $u^k=v^k$ on $K$. The theorem is thus proved.
\endproof
With the same proof as for Theorem \ref{t3.15}, we can also prove the following convergence result.
\b{cor} Suppose that $u^k\in \mathcal{E}_p(\Omega)$, $k=1,\ldots,n, ~p\geq1$, and $\{g_j^k\}_{j=1}^\infty\subset\mathcal{E}_p(\Omega)$ decreases to $u^k$, $j\rightarrow+\infty$. Then the sequence of measures $\triangle g_j^1\wedge\ldots\wedge\triangle g_j^n\longrightarrow\triangle u^1\wedge\ldots\wedge\triangle u^n$ weakly.
\e{cor}

Integration by parts is also allowed in $\mathcal{E}_p(\Omega)$, $p\geq1$, as in Theorem \ref{integrationthm} for the class $\mathcal{F}(\Omega)$.
\b{thm}\label{t4.4}Let $u,v,w^1,\ldots,w^{n-1}\in \mathcal{E}_p(\Omega)$ and $T=\triangle w^1\wedge\ldots \wedge \triangle w^{n-1}$. Then $$\int u\triangle v\wedge T=\int v\triangle u\wedge T.$$
\e{thm}
\proof As in the proof of Proposition \ref{p3.3}, by Theorem \ref{t3.15} we have for $h\in \mathcal{E}_0\cap C(\Omega)$ and $u^k\in\mathcal{E}_p(\Omega)$, $k=1,\ldots,n$,
$$\lim_{j\rightarrow+\infty}\int h\triangle g_j^1\wedge \ldots\wedge \triangle g_j^n=\int h\triangle u^1\wedge \ldots\wedge \triangle u^n,$$ where $\{g_j^k\}$ is the sequence as in Theorem \ref{t3.15}. The conclusion follows by repeating the same arguments as in the proof of Theorem \ref{integrationthm}.
\endproof

\b{pro}\label{t4.6} Let $u,v\in \mathcal{E}_p(\Omega)$ (or $\mathcal{F}(\Omega)$) such that $u\leq v$ on $\Omega$. Then $$\int_\Omega (\triangle u)^n\geq \int_\Omega (\triangle v)^n.$$
\e{pro}
\proof Let $h \in\mathcal{E}_0\cap C(\Omega)$. And let $\{u_j\},\{v_j\}$ be sequences in $\mathcal{E}_0(\Omega)$ decreasing to $u,v$ respectively as in the definition of $\mathcal{E}_p(\Omega)$ (or $\mathcal{F}(\Omega)$). We can assume that $u_j\leq v_j$ on $\Omega$, so we can integrate by part to get $$\int_\Omega (-h)(\triangle u_j)^n\geq \int_\Omega (-h)(\triangle v_j)^n.$$ For $u,v\in \mathcal{E}_p(\Omega)$ (or $\mathcal{F}(\Omega)$), by Theorem \ref{t3.15} (or Proposition \ref{p3.3}) we can let $j\rightarrow+\infty$ to get $$\int_\Omega (-h)(\triangle u)^n\geq \int_\Omega (-h)(\triangle v)^n.$$
To complete the proof, it suffices to let $h$ decrease to $-1$.
\endproof
The following results can be obtained by repeating the arguments in  \cite{cegrell1998,cegrell2004}.
\b{pro}\label{p4.33} (1) If $u_j\in \mathcal{E}_p$, $u_j$ converges increasingly to $u$ as $j\rightarrow+\infty$, then $u\in \mathcal{E}_p$ and $(\triangle u_j)^n\rightarrow (\triangle u)^n$ as $j\rightarrow+\infty$. \\
(2) If $u\in \mathcal{E}_1(\Omega)$, then $\int (-u)(\triangle u)^n<+\infty$. If $\{u_j\}$ is a sequence in $\mathcal{E}_0(\Omega)$ decreasing to $u$, then $\int u_j(\triangle u_j)^n\searrow\int u(\triangle u)^n$, i.e., $E(u_j)\nearrow E(u)$.\\
(3) Suppose that $S$ is a Q-polar subset of $\Omega$. Then there is a $\psi\in \mathcal{E}_1(\Omega)$ such that $S\subset \{\psi=-\infty\}$.
\e{pro}

In \cite{wan8} we proved a Demailly-type inequality of quaternionic Monge-Amp\`ere measures for functions in $PSH\cap L_{loc}^\infty(\Omega)$. Now we can extend this inequality to the classes $\mathcal{F}(\Omega),\mathcal{E}_p(\Omega)$.

\b{lem}\label{p2}(Proposition 3.5 in \cite{wan8}) Let $u,v$ be locally bounded PSH functions on $\Omega$. Then \ben (\triangle\max\{u,v\})^n\geq \chi_{\{u\geq v\}} (\triangle u)^n+\chi_{\{u< v\}} (\triangle v)^n.
\een
\e{lem}

\b{thm}\label{t4.7}Let $u,u^1,\ldots,u^{n-1}\in\mathcal{F}(\Omega)$ (or $\mathcal{E}_p(\Omega)$), $v\in PSH^-(\Omega)$ and $T=\triangle u^1\wedge \ldots\wedge \triangle u^{n-1}$. Then $$\triangle \max(u,v)\wedge T\big|_{\{u>v\}}=\triangle u\wedge T\big|_{\{u>v\}}.$$
\e{thm}
\proof First assume that $v\equiv a$ with some constant $a<0$. By Theorem \ref{t1.1} there exist $u_j,u_j^k\in \mathcal{E}_0(\Omega)\cap C(\overline{\Omega})$ such that $u_j$ decreases to $u$ and $u_j^k$ decreases to $u^k$ for each $ k=1, \ldots,n-1$. Since $\{u_j>a\}$ is open, one has $$\triangle \max(u_j,a)\wedge T_j\big|_{\{u_j>a\}}=\triangle u_j\wedge T_j\big|_{\{u_j>a\}},$$ where $T_j=\triangle u^1_j\wedge \ldots\wedge \triangle u^{n-1}_j$. As $\{u>a\}\subset\{u_j>a\}$ we obtain $$\triangle \max(u_j,a)\wedge T_j\big|_{\{u>a\}}=\triangle u_j\wedge T_j\big|_{\{u>a\}}.$$ It follows from Proposition \ref{p3.3} and Theorem \ref{t3.15} that as $j\rightarrow+\infty$
\ben \b{aligned}\max(u-a,0)\triangle( \max(u_j,a))\wedge T_j&\longrightarrow \max(u-a,0)\triangle( \max(u,a))\wedge T\\ \max(u-a,0)\triangle u_j\wedge T_j&\longrightarrow \max(u-a,0)\triangle u\wedge T.
\e{aligned}\een Hence $$\max(u-a,0)[\triangle( \max(u,a))-\triangle u]\wedge T=0.$$
It follows that $\triangle( \max(u,a))\wedge T=\triangle u\wedge T$ on the set $\{u>a\}$.

Now in the general case for $v\in PSH^-(\Omega)$. Since $\{u>a\}=\cup_{a\in \mathbb{Q}^-}\{u>a>v\}$, it suffices to repeat the arguments in Theorem 4.1 in \cite{hiep}.
\endproof
\b{cor}\label{l3.19'} For $u,v\in \mathcal{E}_p(\Omega)$, $p\geq 1$, 
$$\chi_{\{u>v\}}(\triangle u)^n=\chi_{\{u>v\}}(\triangle \max\{u,v\})^n.$$
\e{cor}
\section{The variational approach}
\subsection{The energy functional}
For a positive Radon measure $\mu$ in $\Omega$, the \emph{energy functional} $\mathcal{F}_\mu:\mathcal{E}_1(\Omega)\rightarrow \mathbb{R}$ is defined by $$\mathcal{F}_\mu(u)=\frac{1}{n+1}E(u)+L_\mu(u),$$ where $L_\mu(u)=\int u d\mu$. Recall that the quaternionic energy $E(\varphi)$ of $\varphi\in \mathcal{E}_0(\Omega)$ is defined as $$E(\varphi)=\int(-\varphi)(\triangle \varphi)^n.$$ We say $\mathcal{F}_\mu$ is proper if $\mathcal{F}_\mu\rightarrow+\infty$ whenever $E\rightarrow+\infty$. We say that a positive measure $\mu$ has \emph{finite quaternionic $p$-energy} if there exists a constant $C=C(p)>0$ such that 
$$ \int (-u)^pd\mu\leq CE_p(u)^{\frac{p}{n+p}},$$ for all $u\in \mathcal{E}_p(\Omega)$. Denote by $\mathcal{M}_p$ the set of all such positive measures. As proved in complex Hessian case (cf. Proposition 4.6 in \cite{lu}), $\mu\in\mathcal{M}_p$ if and only if $\mathcal{E}_p(\Omega)\subset L^p(\Omega,\mu)$.

\b{pro}\label{l6.1} (1) If $\{u_j\}\subset\mathcal{E}_1(\Omega)$ such that $\sup_j E(u_j)<+\infty$, then the function $u=(\sup_ju_j)^*$ is in $\mathcal{E}_1(\Omega)$.\\
(2) If $\{u_j\}\subset\mathcal{E}_1(\Omega)$ such that $\sup_j E(u_j)<+\infty$ and $u_j$ decreases to some function $u$ then the limit function $u$ is in $\mathcal{E}_1(\Omega)$.\\
(3) The functional $E:\mathcal{E}_1(\Omega)\rightarrow \mathbb{R}$ is lower semi-continuous.\\
(4) If $u,v\in \mathcal{E}_1(\Omega)$, then $$ E(u+v)^{\frac{1}{n+1}}\leq  E(u)^{\frac{1}{n+1}}+ E(v)^{\frac{1}{n+1}}.$$ Moreover, if $\mu\in \mathcal{M}_1$ then $\mathcal{F}_\mu$ is proper and convex.
\\
(5) If $\mu\in \mathcal{M}_1$, $u\in \mathcal{E}_1(\Omega)$ and $ \mathcal{E}_0(\Omega)\ni u_j\searrow u,$ then $\lim_{j\rightarrow\infty} \mathcal{F}_\mu(u_j)= \mathcal{F}_\mu(u).$\\
(6)  For $u,v\in \mathcal{E}_1(\Omega)$, $$\int_{\{u>v\}}(\triangle u)^n\leq\int_{\{u>v\}}(\triangle v)^n.$$
\e{pro}
\proof (1) Note that $u=(\sup_ju_j)^*$ is in $PSH^-(\Omega)$ by Proposition \ref{p1.1}. It follows from Theorem \ref{t1.1} that there exists a sequence $\{\varphi_j\}\subset \mathcal{E}_0(\Omega)\cap C(\Omega)$ such that $\varphi_j$ decreases to $u$. Thus $\varphi_j\geq u_j$. Since $\sup_j E(u_j)<+\infty$, we have $\sup_jE(\varphi_j)<+\infty$ by Corollary \ref{estimatecor}. Then $u\in \mathcal{E}_1(\Omega)$ by definition.

(2) As in the proof of (1), there exists a sequence $\{\varphi_j\}\subset \mathcal{E}_0(\Omega)\cap C(\Omega)$ such that $\varphi_j$ decreases to $u$, since $u\in PSH^-(\Omega)$ by Proposition \ref{p1.1}. Then the function $\psi_j:=\max\{u_j,\varphi_j\}\in\mathcal{E}_0(\Omega)$ by Corollary \ref{c4.3}, and $E(\psi_j)\leq E(u_j)$ by Corollary \ref{estimatecor}. It follows $u\in \mathcal{E}_1(\Omega)$.

(3) Let $\{u_j\}\subset \mathcal{E}_{1}(\Omega)$, $u\in\mathcal{E}_{1}(\Omega)$ be such that $u_j\rightarrow u$ in $L_{loc}^1(\Omega)$. By (1) above, the function $\varphi_j:=(\sup_{k\geq j}u_k)^*\in \mathcal{E}_1(\Omega)$, and $\varphi_j\searrow u$. By Proposition \ref{p4.33} and Corollary \ref{estimatecor}, $E(\varphi_j)\leq E(u_j)$ and $E(\varphi_j)\nearrow E(u)$. It follows that $\liminf_j E(u_j)\geq E(u)$.

(4) By the energy estimate (Theorem \ref{energy estimate}) we have 
\ben \b{aligned}E(u+v)=&\int (-u)(\triangle (u+v))^n +\int (-v)(\triangle (u+v))^n\\\leq& E(u)^{\frac{1}{n+1}}E(u+v)^{\frac{n}{n+1}}+E(v)^{\frac{1}{n+1}}E(u+v)^{\frac{n}{n+1}}. \e{aligned}\een 
It follows that $E^{\frac{1}{n+1}}$ is convex, thus $E$ is also convex. For $\mu\in \mathcal{M}_1$, there exists $C>0$ such that $\int (-u)d\mu\leq CE(u)^{\frac{1}{n+1}}$. So $\mathcal{F}_\mu(u)\geq \frac{1}{n+1}E(u)-CE(u)^{\frac{1}{n+1}},$ which implies that $\mathcal{F}_\mu$ is proper. 

(5) Note that $E(u_j)\nearrow E(u)$ by Proposition \ref{p4.33}. It follows from monotone convergence theorem and the fact $\mu\in \mathcal{M}_1$ that $\lim_j\mathcal{F}_\mu(u_j)$ is finite and depends only on $u$.

(6) As in the proof of Proposition \ref{t4.6}, we have $$\int_{\Omega}(-h)(\triangle \max\{u,v\})^n\leq\int_{\Omega}(-h)(\triangle u)^n,$$ for $h\in  \mathcal{E}_0(\Omega)\cap C(\Omega)$. All terms are finite for $h\in \mathcal{E}_0(\Omega)$, $u\in \mathcal{E}_1(\Omega)$, since 
$$\int_\Omega (-h)(\triangle u)^n\leq E(h)^{\frac{1}{n+1}} E(u)^{\frac{n}{n+1}}<+\infty.$$
Then it follows from Corollary \ref{l3.19'} that 
\beq\label{7.1}\b{aligned}
&\int_{\{u>v\}}(-h)(\triangle u)^n=\int_{\{u>v\}}(-h)(\triangle \max\{u,v\})^n\\\leq&\int_\Omega(-h)(\triangle \max\{u,v\})^n+\int_{\{u<v\}}h(\triangle \max\{u,v\})^n \\ \leq &\int_\Omega(-h)(\triangle v)^n+\int_{\{u<v\}}h(\triangle v)^n=\int_{\{u>v\}}(-h)(\triangle v)^n.
\e{aligned}\eeq
 Let $h\searrow -1$ to obtain the result.
\endproof

\subsection{The projection theorem}

Let $u:\Omega\rightarrow[-\infty,+\infty)$ be upper semi-continuous. Define the projection of $u$ on $\mathcal{E}_1(\Omega)$ by $$P(u)=\sup\{v\in\mathcal{E}_1(\Omega): v\leq u\}. $$ 
\b{lem} \label{l4.9'} Let $u:\Omega\rightarrow \mathbb{R}$ be continuous. Suppose that there exists $w\in \mathcal{E}_1(\Omega)$ such that $w\leq u$. Then $\int_{\{P(u)<u\}}(\triangle P(u))^n=0$. 
\e{lem}
\proof  Assume that $w$ is bounded on $\Omega$. By Choquet's lemma, one can find an increasing sequence $\{u_j\}\subset \mathcal{E}_1(\Omega)\cap L^\infty(\Omega)$ such that $u_j\leq u$ and $(\lim_j u_j)^*=P(u)$. Take $q_0\in \{P(u)<u\}.$ Since $u$ is continuous and $P$ is upper semi-continuous, there exist $\epsilon>0, r>0$ such that for each $q\in B=B(q_0,r)$, $P(u)(q)<u(q_0)-\epsilon<u(q)$, that is, $B\subset\{P(u)<u\}$. 

For fixed $j$, we can approximate $u_j|_{\partial B}$ from above by a sequence of continuous functions on $\partial B$. By \cite[Theorem 1.3]{alesker4}, we can find a function $\varphi_j\in PSH(B)$ such that $\varphi_j=u_j$ on $\partial B$, and $(\triangle \varphi_j)^n=0$ in $B$. It follows from the comparison principle that $u_j\leq\varphi_j$ in $B$. Define $$\psi_j=\left\{
      \begin{array}{ll}
        \varphi_j, & \text{in}~B, \\
        u_j, &  \text{in}~\Omega\backslash B.      \end{array}
    \right. $$ Noting that $\varphi_j=\max\{\varphi_j,u_j\}$ in $B$, we have $\varphi_j\in \mathcal{E}_1$, thus $\psi_j\in \mathcal{E}_1(\Omega)\cap L^\infty(\Omega)$. For each $q\in \partial B$, $\varphi_j(q)=u_j(q)\leq P(u)(q)\leq u(q_0)-\epsilon$. It follows that $\varphi_j\leq u(q_0)-\epsilon$ in $B$. So we have $u_j\leq \psi_j\leq u$ in $\Omega$. Therefore $(\lim_j \psi_j)^*=P(u)$. By the convergence result we established in \cite{wan4} (Lemma \ref{t3.2}), $(\triangle \psi_j)^n\rightarrow (\triangle P(u))^n$ weakly. It follows that $ (\triangle P(u))^n(B)\leq \liminf_j(\triangle \psi_j)^n(B)=0$.
\endproof

\b{lem}\label{l4.10'} Let $u\in \mathcal{E}_1(\Omega)$ and $v\in \mathcal{E}_1(\Omega)\cap C(\Omega)$. Define for $t<0$,
$$h_t=\frac{P(u+tv)-tv-u}{t}.$$ Then for each $0\leq k\leq n$, 
\beq\label{ht}\lim_{t\rightarrow0^-} \int_{\Omega} h_t(\triangle u)^k\wedge (\triangle P(u+tv))^{n-k}=0.\eeq In particular, $$\lim_{t\rightarrow0^-} \int_{\Omega} \frac{P(u+tv)-u}{t}(\triangle u)^k\wedge (\triangle P(u+tv))^{n-k}=\int_\Omega v(\triangle u)^n.$$
\e{lem}
\proof Noting that $u+tv\geq P(u+tv)\geq u$ for $t<0$, we have $0\leq h_t\leq -v$. It follows from $u\in \mathcal{E}_1(\Omega)$ that $P(u)=u$. Since $h_t$ is decreasing in $t$, for $s<0$ we have,
\ben\b{aligned}&\lim_{t\rightarrow0^-} \int_{\Omega} h_t(\triangle u)^k\wedge (\triangle P(u+tv))^{n-k}\leq \lim_{t\rightarrow0^-} \int_{\Omega} h_s(\triangle u)^k\wedge (\triangle P(u+tv))^{n-k}\\
&=\int_{\Omega} h_s(\triangle u)^k\wedge (\triangle P(u))^{n-k}=\int_{\Omega} h_s(\triangle u)^n=\int_{\{P(u+sv)-sv<u\}} h_s(\triangle u)^n \\&\leq \int_{\{P(u+sv)-sv<u\}} (-v)(\triangle u)^n.\e{aligned}\een
Take a decreasing sequence $\{u_k\}\subset \mathcal{E}_0(\Omega)\cap C(\Omega)$ such that $u_k\searrow u$ and 
$$ \int_{\{P(u+sv)-sv<u\}} (-v)(\triangle u)^n\leq C \int_{\{P(u_k+sv)-sv<u\}} (-v)(\triangle u)^n.$$ 
Note that $P(u_k+sv)-sv\in \mathcal{E}_1(\Omega)$. By Theorem \ref{t1.1}, the inequality (\ref{7.1}) in the proof of Proposition \ref{l6.1} (6) also holds for $h\in PSH^-(\Omega)$. So we have
\beq\label{7.2}\b{aligned}& \int_{\{P(u_k+sv)-sv<u\}} (-v)(\triangle u)^n\leq  \int_{\{P(u_k+sv)-sv<u\}} (-v)(\triangle (P(u_k+sv)-sv))^n\\ &\leq \int_{\{P(u_k+sv)-sv<u_k\}} (-v)(\triangle (P(u_k+sv)-sv))^n\\
&\leq  \|v\|\int_{\{P(u_k+sv)-sv<u_k\}} (\triangle (P(u_k+sv)-sv))^n.
\e{aligned}\eeq
Apply Lemma \ref{l4.9'} with continuous $u_k+sv$ to get $$\int_{\{P(u_k+sv)-sv<u_k\}} (\triangle (P(u_k+sv)))^n=0.$$
Therefore the right hand side of (\ref{7.2}) tends to $0$ as $s\rightarrow 0$. The proof is thus complete. \endproof

\b{thm}\label{t4.11'}Let $u\in \mathcal{E}_1(\Omega)$ and $v\in \mathcal{E}_1(\Omega)\cap C(\Omega)$. Then
$$ \frac{d}{dt}\Big|_{t=0} E(P(u+tv))=(n+1)\int_\Omega(-v)(\triangle u)^n.$$
\e{thm}
\proof Note that if $t>0$, $u+tv\in \mathcal{E}_1(\Omega)$, thus $P(u+tv)=u+tv$. It follows that $$\frac{d}{dt}\Big|_{t=0^+} E(u+tv)=(n+1)\int_\Omega(-v)(\triangle u)^n.$$
For $t<0$, since $P(u+tv),u\in \mathcal{E}_1(\Omega)$, we can integration by parts to get\ben\b{aligned}&\frac{1}{t}\left(\int -P(u+tv)(\triangle P(u+tv))^n-\int (-u)(\triangle u)^n\right)\\ =&\sum_{k=0}^n \int_\Omega \frac{u-P(u+tv)}{t} (\triangle u)^k\wedge (\triangle P(u+tv))^{n-k}\longrightarrow (n+1)\int_\Omega (-v)(\triangle u)^n, 
\e{aligned}\een as $t\nearrow0$, by Lemma \ref{l4.10'}. 
\endproof

\subsection{The quaternionic Monge-Amp\`{e}re equation}
In this section we combine all the results we established before to solve the quaternionic Monge-Amp\`{e}re equations. We follow the variational method in \cite{lu}.
\b{lem}\label{l4.12'} Assume that $\mu$ is a positive Radon measure such that $\mathcal{F}_\mu$ is proper and lower semicontinuous on $\mathcal{E}_1(\Omega)$. Then there exists $\varphi\in \mathcal{E}_1(\Omega)$ such that $\mathcal{F}_\mu(\varphi)=\inf_{\psi\in \mathcal{E}_1(\Omega)}\mathcal{F}_\mu(\psi).$
\e{lem}
\proof One can repeat the arguments in Lemma 4.12 in \cite{lu}.\endproof
\b{thm}\label{t4.13'} Let $\varphi\in \mathcal{E}_1(\Omega)$ and $\mu\in \mathcal{M}_1$. Then $$(\triangle\varphi)^n=\mu\Longleftrightarrow\mathcal{F}_\mu(\varphi)=\inf_{\psi\in \mathcal{E}_1(\Omega)}\mathcal{F}_\mu(\psi).$$
\e{thm}
\proof First assume that $(\triangle \varphi)^n=\mu$. By the energy estimate and Young's inequality, for each $\psi\in\mathcal{E}_1(\Omega)$ we have
\ben\b{aligned} \frac{1}{n+1}E(\psi)-\mathcal{F}_\mu(\psi)&=\int_{\Omega}(-\psi)(\triangle \varphi)^n\leq E(\psi)^{ \frac{1}{n+1}}E(\varphi)^{ \frac{n}{n+1}}\\
&\leq  \frac{1}{n+1}E(\psi)+ \frac{n}{n+1}E(\varphi)=\frac{1}{n+1}E(\psi)-\mathcal{F}_\mu(\varphi).\e{aligned}\een
It follows that $\mathcal{F}_\mu(\psi) \geq\mathcal{F}_\mu(\varphi)$, thus $\mathcal{F}_\mu(\varphi)=\inf_{\psi\in \mathcal{E}_1(\Omega)}\mathcal{F}_\mu(\psi).$

Now let $\psi\in \mathcal{E}_1(\Omega)\cap C(\Omega)$. Define $$ g(t)=\frac{1}{n+1}E(P(\varphi+t\psi))+L_\mu(\varphi+t\psi).$$ 
Note that $P(\varphi+t\psi)\leq \varphi+t\psi$ and $P(\varphi+t\psi)\in \mathcal{E}_1(\Omega)$. Assume that $\varphi$ minimizes $\mathcal{F}_\mu$ on $ \mathcal{E}_1(\Omega)$, then we have $$g(t)\geq \mathcal{F}_\mu(P(\varphi+t\psi))\geq \mathcal{F}_\mu(\varphi)=g(0),$$ i.e., $g$ obtains its minimum at $t=0$. It follows that $g'(0)=0$. Note that $\mu\in \mathcal{M}_1$, $$\frac{d}{dt}\Big|_{t=0}L_\mu (\varphi+t\psi)=L_\mu(\psi).$$ Then the result follows from Theorem \ref{t4.11'}.
\endproof

The following comparison principle follows from Proposition \ref{l6.1} (6) directly. 
\b{lem}\label{t3.26'} Let $u,v\in  \mathcal{E}_1(\Omega)$ with $(\triangle u)^n\geq (\triangle v)^n. $ Then $u\leq v$ in $\Omega$.
\e{lem}
We introduced in \cite{wan8} the quaternionic polar ($Q$-polar for short) set, which is the counterpart of pluripolar set in the complex case. See \cite{wan8} for detailed discussion of $Q$-polar set.
\b{lem}\label{l6.2} Let $\mu$ be a positive Radon measure in $\Omega$ such that $\mu(\Omega)<+\infty$ and $\mu$ does not charge $Q$-polar sets. Let $\{u_j\}\subset PSH^-(\Omega)$ converge to $u\in PSH^-(\Omega)$ in $L_{loc}^1$ and $\sup\int_\Omega(-u_j)^2d\mu<+\infty$. Then $\int_\Omega u_jd\mu\rightarrow \int_\Omega ud\mu$.
\e{lem}
\proof One can get this result by following the proof of the complex Hessian case in \cite[Lemma 4.2]{lu}. For convenience of readers we repeat it here. Since $\sup\int_\Omega(-u_j)^2d\mu<+\infty$, by Banach-Saks theorem there exists a subsequence $u_j$ such that $\varphi_N:=\frac{1}{N}\sum_{j=1}^N u_j$ converges in $L^2(\mu)$ and $\mu$-almost everywhere to $\varphi$. We also have $\varphi_N\rightarrow u$ in $L_{loc}^1$. Define $\psi_j:=(\sup_{k\geq j}\varphi_k)^*$ for each $j$. Then $\psi_j$ decreases to $u$ in $\Omega$. Note that the set $\{(\sup_{k\geq j} \varphi_k)^*>\sup_{k\geq j} \varphi_k\}$ is negligible, thus it is also $Q$-polar (cf. \cite[Theorem 1.2]{wan8}). Since $\mu$ does not charge on $Q$-polar set, we have $\psi_j=\sup_{k\geq j}\varphi_k$ $\mu$-almost everywhere. It follows that $\psi_j$ converges to $\varphi$ $\mu$-almost everywhere and $u=\varphi$ $\mu$-almost everywhere. So $\lim_j\int u_jd\mu=\lim_j\int \varphi_jd\mu=\int ud\mu$.
\endproof
\b{lem}\label{blocki}Let $\Omega$ be a bounded domain in $\mathbb{H}^n$. Suppose that $u,h,v_{1},\ldots,v_{n}$ are locally bounded plurisubharmonic functions
satisfying $u\leq h$ in $\Omega$, and
$\lim_{q\rightarrow\partial\Omega}(h(q)-u(q))=0$. Let
$v_{1},\ldots,v_{n}$ be nonpositive in
$\Omega$. Then
\begin{equation}\label{8.1}\int_{\Omega}(h-u)^{n}\triangle v_{1}\wedge\ldots\wedge\triangle v_{n}\leq
n!\|v_{1}\|_{\Omega}\cdots\|v_{n-1}\|_{\Omega}\int_{\Omega}|v_{n}|(\triangle u)^n.\end{equation}
\e{lem}
 \proof  We can assume that
$h=u$ in a neighborhood of $\partial\Omega$ and $h$ and $u$ are smooth. By hypothesis, we can let
$\|v_{i}\|_{\Omega}=1$. We want to show that for $p=2,\ldots,n$,
\begin{equation}\label{8.2}\begin{aligned}&\int_{\Omega}(h-u)^{p}(\triangle u)^{n-p}\wedge \triangle v_{n-p+1}\wedge\ldots\wedge \triangle v_{n}\\&\,\,\,\,\,\,\,\,\,\,\,\,\,\,\,\,\leq
p\int_{\Omega}(h-u)^{p-1}(\triangle u)^{n-p+1}\wedge \triangle v_{n-p+2}\wedge\ldots\wedge\triangle v_{n}.\end{aligned}\end{equation}
The left-hand side of (\ref{8.2}) is equal to
$$\int_{\Omega}v_{n-p+1}\triangle[(h-u)^{p}] \wedge(\triangle u)^{n-p}\wedge\triangle v_{n-p+2}\wedge\ldots\wedge\triangle v_{n}.$$
And
it follows from $-v_{n-p+1}=|v_{n-p+1}|\leq1$ that the right-hand
side of (\ref{8.2}) is greater than or equal to
$$p\int_{\Omega}(-v_{n-p+1})(h-u)^{p-1}(\triangle u)^{n-p+1}\wedge \triangle v_{n-p+2}\wedge\ldots\wedge\triangle v_{n}.$$
Note that $(\triangle u)^{n-p}\wedge \triangle v_{n-p+2}\wedge\ldots\wedge\triangle v_{n}$ is a closed positive $(2n-2)$-current. To show (\ref{8.2}),  it suffices to prove that for $p=2,\ldots,n$, \begin{equation}\label{8.22}-\triangle[(h-u)^{p}]\leq p(h-u)^{p-1}\triangle u,\end{equation} i.e., the difference between the right- and the left-hand side of (\ref{8.22}) is a positive $2$-form.
By (\ref{2.228}) we
have\begin{equation}\label{8.4}\begin{aligned}&\triangle[(h-u)^{p}]=d_0d_1[(h-u)^{p}]=d_0\left[p(h-u)^{p-1}d_1(h-u)\right]\\
=&p(p-1)(h-u)^{p-2}d_0(h-u)\wedge d_1(h-u)+p(h-u)^{p-1}d_0d_1(h-u)\\
\geq&p(p-1)(h-u)^{p-2}d_0(h-u)\wedge d_1(h-u)-p(h-u)^{p-1}d_0d_1u\\
\geq&-p(h-u)^{p-1}\triangle u,\end{aligned}\end{equation}where
the first inequality follows from the positivity of
$\triangle h=d_0d_1h$ and the last inequality follows from the positivity of
$d_0(h-u)\wedge d_1(h-u)$. Then we get (\ref{8.2}).
And \begin{equation*}\begin{aligned}\int_{\Omega}(h-u)^{n}\triangle v_{1}\wedge\ldots\wedge\triangle v_{n}
\leq n!\int_{\Omega}(h-u)(\triangle u)^{n-1}\wedge\triangle v_n\\
=n!\int_{\Omega}(-v_{n})(\triangle u)^{n-1}\wedge\triangle(u-h)
\leq n!\int_{\Omega}|v_{n}|(\triangle u)^n.\end{aligned}\end{equation*}\par
\endproof

\b{thm}\label{t4.14'}Let $\mu$ be a positive Radon measure in $\mathcal{M}_1$. Then there exists a unique $u\in\mathcal{E}_1(\Omega)$ such that $(\triangle u)^n=\mu$.
\e{thm}
\proof The uniqueness follows from Lemma \ref{t3.26'} above. First suppose that $\mu$ has compact support $K\Subset \Omega$. Denote by $u_K:=u_{K,\Omega}^*$ the regularized relatively extremal function of $K$ given by (\ref{u*}). Then $u_K\leq u_L$ for each compact $L\subset K$, thus $E(u_L)\leq E(u_K)$. Define
$$ \mathcal{M}=\left\{\nu>0: \text{supp}~\nu\subset K, ~\int_\Omega(-\varphi)^2d\nu\leq CE(\varphi)^{\frac{2}{n+1}},~\text{for}~\forall \varphi\in \mathcal{E}_1(\Omega)\right\},$$ where $C$ is chosen such that $C>2E(u_K)^{\frac{n-1}{n+1}}$. We have $(\triangle u_L)^n\in \mathcal{M}$ for each compact $L\subset K$. This is because 
\ben\b{aligned} \int(-\varphi)^2(\triangle u_L)^n&\leq 2\|u_L\|_\Omega\int_\Omega (-\varphi)\triangle \varphi\wedge (\triangle u_L)^{n-1}\\&\leq 2E(\varphi)^{\frac{2}{n+1}}E(u_L)^{\frac{n-1}{n+1}}\leq 2E(\varphi)^{\frac{2}{n+1}}E(u_K)^{\frac{n-1}{n+1}}<CE(\varphi)^{\frac{2}{n+1}},
\e{aligned}\een by (\ref{8.2}) in Lemma \ref{blocki} and energy estimate. Since integration by parts and convergence result are valid on $ \mathcal{E}_1(\Omega)$ by Theorem \ref{t3.15} and Theorem \ref{t4.4}, the estimate (\ref{8.2}) is applicable for $ \varphi\in \mathcal{E}_1(\Omega)$.

Now one can repeat the arguments in \cite{lu} to complete the proof of this theorem. We give a word-by-word copy here for the sake of completeness. 
Let $S:=\sup\{\nu(\Omega):\nu\in \mathcal{M}\}. $ We have $S<+\infty$. Indeed, since $\Omega$ is hyperconvex, we can find $h\in PSH^-(\Omega)\cap C(\overline{\Omega})$ such that $h<-1$ on $K\Subset \Omega$, and $$\nu(K)\leq \int_K(-h)^2d\nu\leq CE(h)^{{\frac{2}{n+1}}}.$$

Fix $\nu_0\in \mathcal{M}$. Define 
$$ \mathcal{M}'=\left\{\nu:\text{supp}\nu\subset K,\int_\Omega(-\varphi)^2d\nu\leq \left[\frac{C}{S}+\frac{C}{\nu_0(\Omega)}\right]E(\varphi)^{\frac{2}{n+1}},~\text{for}~\forall \varphi\in \mathcal{E}_1(\Omega)\right\}.$$
Then for each $ \nu\in\mathcal{M}$ and $\varphi\in\mathcal{E}_1(\Omega)$, one can check that $$\frac{(S-\nu(\Omega))\nu_0+\nu_0(\Omega)\nu}{S\nu_0(\Omega)}\in\mathcal{M}'.$$ Therefore $\mathcal{M}'$ is non-empty convex and weakly compact. By a generalized Radon-Nykodim Theorem, there exists a positive $\nu\in\mathcal{M}'$ and a positive function $f\in L^1(\nu)$ such that $\mu=fd\nu+\sigma$ with $\sigma$ orthogonal to $\mathcal{M}'$. Since $(\triangle u_L)^n\in\mathcal{M}$ for each compact $L\subset K$, each measure orthogonal to $\mathcal{M}'$ must be supported in some $Q$-polar set. Since $\mu\in\mathcal{M}_1$ does not charge $Q$-polar sets, we have $\sigma\equiv 0$, thus $\mu=fd\nu$ with $f\in L^1(\nu)$, $\nu\in\mathcal{M}'$.

For each $j\in\mathbb{N}$, set $\mu_j=\min(f,j)\nu$. By Proposition \ref{l6.1} and Lemma \ref{l6.2} we see that $\mathcal{F}_{\mu_j}$ is proper and lower semi-continuous on $\mathcal{E}_1(\Omega)$. Therefore by Lemma \ref{l4.12'} and Theorem \ref{t4.13'}, there exists $u_j\in\mathcal{E}_1(\Omega)$ such that $(\triangle u_j)^n=\mu_j$. By Lemma \ref{t3.26'}, $u_j$ decreases to a function $u\in PSH^-(\Omega)$, which satisfies $(\triangle u)^n=\mu$. The existence is proved. 

By Proposition \ref{l6.1}, $u$ must be in $\mathcal{E}_1(\Omega)$. Indeed, since $\mu\in\mathcal{M}_1$, $$E(u_j)=\int_\Omega(-u_j)(\triangle u_j)^n=\int_\Omega(-u_j)d\mu_j\leq \int_\Omega(-u_j)d\mu\leq CE(u_j)^{\frac{1}{n+1}}. $$
This means that $\sup_jE(u_j)<+\infty$. 

When $\mu$ does not have compact support, it suffices to approximate $\Omega$ by a sequence of compact subsets $\{K_j\}$ and consider $\mu_j=\chi_{K_j}d\mu$.
\endproof

\b{lem}\label{l5.1'} Let $\mu$ be a positive Radon measure with $\mu(\Omega)<+\infty$. Assume that $\psi$ is a bounded PSH function in $\Omega$ such that $(\triangle \psi)^n\geq \mu$. Then there exists a unique $\varphi\in \mathcal{E}_0(\Omega)$ such that $(\triangle \varphi)^n=\mu$.
\e{lem}
\proof Assume that $-1\leq \psi\leq 0$. Let $h\in  \mathcal{E}_0(\Omega)$ be the exhaustion function of $\Omega$. Let $h_j=\max\{\psi,jh\}$ and $A_j=\{q\in \Omega:jh<-1\}$. Note that $\chi_{A_j}\mu\in \mathcal{M}_1$. By Theorem \ref{t4.14'}, for each $j$, there exists $\varphi_j\in   \mathcal{E}_1(\Omega)$ such that $(\triangle \varphi_j)^n=\chi_{A_j}\mu.$ 

On $A_j$, $(\triangle \varphi_j)^n=\mu\leq (\triangle \psi)^n$, thus $\varphi_j\geq \psi=h_j$ on $A_j$ by Lemma \ref{t3.26'}. Noting that outside $A_j$, $(\triangle \varphi_j)^n=0\leq (\triangle h_j)^n$, we have $0\geq\varphi_j\geq h_j\geq \psi$ on $\Omega$. Thus $\varphi_j\in\mathcal{E}_0(\Omega)$. Again by Lemma \ref{t3.26'}, $\varphi_j$ decreases to some $\varphi\in \mathcal{E}_0(\Omega)$ and $\varphi$ satisfies $(\triangle \varphi)^n=\mu$.
\endproof

\b{pro}\label{t5.2'} Let $u,v\in \mathcal{E}_p(\Omega)$, $p>1$, then $$\int_{\{u>v\}}(\triangle u)^n\leq \int_{\{u>v\}}(\triangle v)^n.$$
\e{pro}
\proof Let $h\in \mathcal{E}_0(\Omega)\cap C(\Omega)$. First assume that $v$ is bounded and vanishes on $\partial\Omega$. Let $K_j$ be an exhaustion sequence of compact subsets of $\Omega$. By Lemma \ref{l5.1'}, there exists $v_j\in\mathcal{E}_0(\Omega)$ such that $(\triangle v_j)^n=\chi_{K_j}(\triangle v)^n$. Then $v_j$ decreases to $v$ by Lemma \ref{t3.26'}. Since $\int_\Omega(-h)(\triangle v_j)^n$ is finite, by Corollary \ref{l3.19'} we can get 
$$\int_{\{u>v_j\}}(-h)(\triangle u)^n\leq \int_{\{u>v_j\}}(-h)(\triangle v_j)^n=\int_{\{u>v_j\}\cap K_j}(\triangle v)^n,$$ as in the proof of Proposition \ref{l6.1} (6). Let $j\rightarrow+\infty$ to get $$\int_{\{u>v\}}(-h)(\triangle u)^n\leq \int_{\{u>v\}}(-h)(\triangle v)^n.$$
Now it suffices to remove the assumption on $v$ as in the proof of Theorem 5.2 in \cite{lu}. \endproof

\b{pro}\label{t5.3'}Let $\mu$ be a positive measure in $\Omega$ which does not charge $Q$-polar sets. Then there exists $\varphi\in \mathcal{E}_0(\Omega)$ and $0\leq f\in L_{loc}^1((\triangle \varphi)^n)$ such that $\mu=f(\triangle \varphi)^n$.
\e{pro}
\proof Assume first that $\mu$ has compact support. Since $\mu$ does not charge $Q$-polar sets, the arguments of the proof of Theorem \ref{t4.14'} can be applied to find $u\in \mathcal{E}_1(\Omega)$ and $0\leq f\in L^1((\triangle u)^n)$ such that $\mu=f(\triangle u)^n$ and $\text{supp} (\triangle u)^n\Subset \Omega$. Define $\psi=(-u)^{-1}$. Since $\gamma(t):=(-t)^{-1}$ is a convex and increasing function of $t\in (-\infty,0)$, $\psi$ is also PSH by Proposition \ref{p1.1}. Note that $u$ is upper semi-continuous, thus is local bounded from above, so does $\psi$. Hence $\psi\in L_{loc}^\infty(\Omega)$. Since $\text{supp} (\triangle u)^n\Subset \Omega$, we can modify $\psi$ in a neighborhood of $\partial\Omega$ such that $\psi\in \mathcal{E}_0(\Omega)$. By direct computation we have $ (\triangle \psi)^n\geq (-u)^{-2n}(\triangle u)^n$ (as in (\ref{8.4})). By Lemma \ref{l5.1'} we can find $\varphi\in \mathcal{E}_0(\Omega)$ such that $(-u)^{-2n}(\triangle u)^n=(\triangle \varphi)^n.$ Therefore $\mu=f(\triangle u)^n=f(-u)^{2n}(\triangle \varphi)^n$. 

When $\mu$ does not have compact support, it suffices to approximate $\Omega$ by an exhaustive sequence of compact subsets $\{K_j\}$ and consider $\chi_{K_j}\mu=f_j(\triangle u_j)^n.$ \endproof

\b{thm} \label{t5.4'}Let $\mu$ be a positive Radon measure on $\Omega$ such that $\mathcal{E}_p(\Omega)\subset L^p(\Omega,\mu)$, $p\geq1$. Then there exists a unique $\varphi\in \mathcal{E}_p(\Omega)$ such that $(\triangle \varphi)^n=\mu$.
\e{thm}
\proof The uniqueness follows from Proposition \ref{t5.2'}. Since $\mu$ does not charge $Q$-polar sets, by Proposition \ref{t5.3'} there exists $u\in \mathcal{E}_0(\Omega)$ and $0\leq f\in L^1_{loc}((\triangle u)^n)$ such that $\mu=f(\triangle u)^n$. For each $j$ let $\mu_j:=\min(f,j)(\triangle u)^n$. By Lemma \ref{l5.1'} we can find $\varphi_j\in\mathcal{E}_0(\Omega)$ such that $(\triangle \varphi_j)^n=\mu_j$. Since $\mu \in \mathcal{M}_p$, we have $\sup_jE_p(\varphi_j)<+\infty$. It follows from Proposition \ref{t5.2'} and definition of $\mathcal{E}_p(\Omega)$ that $\varphi_j$ decreases to some $\varphi\in\mathcal{E}_p(\Omega)$ and $\varphi$ satisfies $(\triangle \varphi)^n=\mu$.
\endproof 

\b{lem}\label{p6.1'}Let $u,v\in\mathcal{E}_p(\Omega)$ and $p\geq1$. Then there exist two sequences $\{u_j\},\{v_j\}\subset \mathcal{E}_0(\Omega)$ decreasing to $u,v$ respectively such that $$\lim_{j\rightarrow+\infty}\int_\Omega (-u_j)^p(\triangle v_j)^n=\int_\Omega (-u)^p(\triangle v)^n.$$
\e{lem}
\proof Since $u\in\mathcal{E}_p(\Omega)$, there exists a sequence $\{u_j\}\subset \mathcal{E}_0(\Omega)$ decreasing to $u$ such that $$\sup_j\int_\Omega (-u_j)^p(\triangle u_j)^n<+\infty.$$ Noting that $(\triangle v)^n$ does not charge on $Q$-polar sets, we can get from Proposition \ref{t5.3'} that $(\triangle v)^n=f(\triangle \psi)^n$ with $\psi\in\mathcal{E}_0(\Omega)$, $0\leq f\in L^1_{loc}((\triangle \psi)^n)$. Then by Lemma \ref{l5.1'} we can find a sequence $\{v_j\}\subset \mathcal{E}_0(\Omega)$ such that $(\triangle v_j)^n=\min(f,j)(\triangle \psi)^n$. It follows from the comparison principle that $v_j$ decreases to some function $\varphi\in \mathcal{E}_p(\Omega)$ satisfying $(\triangle v)^n=(\triangle \varphi)^n$. Hence we have $v\equiv \varphi$. Therefore $$\int_\Omega (-u)^p(\triangle v)^n=\lim_{j\rightarrow+\infty}\int_\Omega (-u_j)^p\min(f,j)(\triangle \psi)^n=\lim_{j\rightarrow+\infty}\int_\Omega (-u_j)^p(\triangle v_j)^n.$$
\endproof

\emph{Proof of Theorem \ref{t6.2'}} Assume that $\mu=(\triangle \varphi)^n$ with $\varphi\in\mathcal{E}_p(\Omega)$ and $\psi$ is another function in $\mathcal{E}_p(\Omega)$. It follows from Lemma \ref{p6.1'} that there exist $\{\varphi_j\},\{\psi_j\}\subset \mathcal{E}_0(\Omega)$ decreasing to $\varphi,\psi$ such that $\sup_j\int_\Omega (-\varphi_j)^p(\triangle \varphi_j)^n<+\infty,$ $\sup_j\int_\Omega (-\psi_j)^p(\triangle \psi_j)^n<+\infty$ and $$\int_\Omega (-\psi)^p(\triangle \varphi)^n=\lim_{j\rightarrow+\infty}\int_\Omega (-\psi_j)^p(\triangle \varphi_j)^n<+\infty$$ by energy estimate. So we have $\psi\in L^p(\Omega,\mu)$. Then it suffices to apply Theorem \ref{t5.4'}. \qed

\begin{Acknw}
This work is supported by National Nature Science Foundation in China (No. 11401390; No.
11571305).
\end{Acknw}
\section*{References}
\bibliography{mybibfile}
\bibliographystyle{plain}
 \end{document}